\DeclareFontFamily{OT1}{manual}{}
\DeclareFontShape{OT1}{manual}{m}{n}{ <10> manfnt }{}
\def\blfootnote{\xdef\@thefnmark{}\@footnotetext}
\newcommand*{\<}{\langle}
\renewcommand*{\>}{\rangle}
\newcommand*{\x}{\times}
\newcommand{\leqs}{\leqslant}
\newcommand{\geqs}{\geqslant}
\newtheorem{lemma}{Lemma}[section]
\newtheorem{theorem}[lemma]{Theorem}
\newtheorem{prop}[lemma]{Proposition}
\newtheorem{cor}[lemma]{Corollary}
\newtheorem{question}[lemma]{Question}
\theoremstyle{definition}
\theoremstyle{remark}
\newtheorem{remark}[lemma]{Remark}
\theoremstyle{definition}
\title{Hyperbolicity over function fields of quadrics}
\begin{document}

\maketitle

%


\begin{abstract}\noindent A strong consequence of quadratic forms becoming hyperbolic over the function field of a form is established. This result is invoked to obtain a new characterisation of hyperbolicity over function fields, and to recover a number of important established results in quadratic form theory by means of novel, short and elementary proofs.
\end{abstract}

%






\blfootnote{James O'Shea, \emph{E-mail}: james.oshea@mu.ie\\ Department of Mathematics and Statistics, National University of Ireland Maynooth.\\ 
\emph{Mathematics Subject Classification}: 11E04, 11E16, 11E81, 12F20, 14E05.\\ \emph{Keywords}: Function fields of quadratic forms, hyperbolicity, Witt kernels, Pfister multiples.}


\section{Introduction}

Viewing a quadratic form $p$ as a homogeneous polynomial of degree two, one is led to consider the algebraic variety $p=0$ and, in the case where $p$ is irreducible, the associated function field $F(p)$. These function fields were first profitably exploited in the early seventies, with a number of fundamental results in quadratic form theory being established in works by Arason and Pfister \cite{AP}, Elman and Lam \cite{EL}, Wadsworth \cite{W}, Knebusch \cite{Kn1}, \cite{Kn2} and others, and they have occupied a central role in the theory ever since, underpinning many of the acknowledged highlights in the field.








One of the main questions concerning function fields is to determine the Witt kernel of these extensions (see \cite[Question X.4.4A]{LAM}). Thus, one seeks to determine the quadratic forms $q$ over $F$ that become hyperbolic after scalar extension to $F(p)$. For $p$ of dimension $n$, an important necessary condition on such forms $q$ states that $p(x_1,\ldots ,x_n)$ must be a similarity factor of $q$ over the rational function field $F(x_1,\ldots ,x_n)$. This result, due to Arason and Pfister, plays a fundamental role in \cite{AP} and \cite{EL}. More broadly, Lam cites this result, together with an associated application of the Third Representation Theorem of Cassels and Pfister, as being a centrepiece result on function fields of quadratic forms (see \cite[p. 354]{LAM}). In \cite{Kn73}, Knebusch gave another proof of Arason and Pfister's result, and established that this necessary condition is also sufficient, thereby characterising those forms $q$ that become hyperbolic over $F(p)$.

In Corollary~\ref{EKMgen}, we establish a strong, general result on the consequences of a form $q$ becoming hyperbolic over a function field $F(p)$. Per Remark~\ref{recover}, the aforementioned theorem of Arason and Pfister readily follows from our result. Corollary~\ref{EKMgen} can be regarded as an extension of \cite[Lemma 3.1]{F1}, a result of Fitzgerald (indeed, we refer the reader to Fitzgerald's works \cite{F1} and \cite{F2} for a number of important results on Witt kernels, including a strong characterisation of scalar multiples of anisotropic Pfister forms in terms of hyperbolicity over function fields \cite[Theorem $1.6$]{F1} and a determination of the generators of the Witt kernels of $F(q)/F$ for $q$ of small dimension \cite[\S 2]{F2}). We prove our extension of Fitzgerald's result by considering the properties of multiples of Pfister forms, which, per Proposition~\ref{hypmultiples}, is a natural approach towards addressing the hyperbolicity question. In particular, we invoke Theorem~\ref{i1}, an important, elementary result on the isotropy of multiples of Pfister forms whose application, we suggest, may prove fruitful in other contexts within the literature. Consequently, we obtain new proofs of Fitzgerald's results \cite[Lemma 3.1]{F1} and \cite[Theorem 3.2]{F1}. In Theorem~\ref{hypchar}, we establish a new characterisation of hyperbolicity over function fields which incorporates Corollary~\ref{EKMgen}, and thus may be regarded as our main result. We further illustrate the fundamental nature of Corollary~\ref{EKMgen} by employing it to recover a number of important established results on quadratic forms. We invoke the Third Representation Theorem of Cassels and Pfister as a key tool in this regard, employing a result of Izhboldin to record a function-field characterisation of the subform property.

We let $F$ denote a field of characteristic different from two, and recall that every non-degenerate quadratic form on a vector space over $F$ can be diagonalised. We write $\<a_1,\ldots,a_n\>$ to denote the (n-dimensional) quadratic form on an $n$-dimensional $F$-vector space defined by $a_1,\ldots ,a_n\in F^{\x}$. We use the term ``form'' to refer to a non-degenerate quadratic form of positive dimension. If $p$ and $q$ are forms over $F$, we denote by $p\perp q$ their orthogonal sum and by $p\otimes q$ their tensor product. We use $aq$ to denote $\<a\>\otimes q$ for $a\in F^{\x}$. We write $p\simeq q$ to indicate that $p$ and $q$ are isometric, and say that $p$ and $q$ are \emph{similar} if $p\simeq aq$ for some $a\in F^{\x}$. For $q$ a form over $F$ and $K/F$ a field extension, we will employ the notation $q_K$ when viewing $q$ as a form over $K$ via the canonical embedding. A form $p$ is a \emph{subform of $q$} if $q\simeq p\perp r$ for some form $r$, in which case we write $p\subseteq q$. A form $q$ represents $a\in F$ if there exists a vector $v$ such that $q(v)=a$. We denote by $D_F(q)$ the set of values in $F^{\x}$ represented by $q$. A form over $F$ is \emph{isotropic} if it represents zero non-trivially, and \emph{anisotropic} otherwise. Every form $q$ has a decomposition $q\simeq q_{\mathrm{an}}\perp i(q)\x\<1,-1\>$ where the anisotropic quadratic form $q_{\mathrm{an}}$ and the non-negative integer $i(q)$ are uniquely determined. Thus, every $2$-dimensional isotropic form is isometric to $\<1,-1\>$. A form $q$ is \emph{hyperbolic} if $i(q)=\frac 1 2 \dim q$.









The similarity factors of $q$ constitute the group $G_F(q)=\{a\in F^{\x}\mid aq\simeq q\}$. Equivalently, $G_F(q)=\{a\in F^{\times}\mid \< 1,-a\>\otimes q\text{ is hyperbolic}\}$. A form $q$ over $F$ is said to be \emph{round} if $D_F(q)=G_F(q)$. We use $H_F(q)$ as notation for the set $\{a\in F^{\times}\mid \< 1,-a\>\otimes q\text{ is isotropic over }F\}$, which can be shown to coincide with $D_F(q)D_F(q)$, the set of products of two elements of $D_F(q)$ (see \cite[Lemme 2.5.4]{R}). Clearly we have that $G_F(q)\subseteq H_F(q)$ for all forms $q$ over $F$. If $1\in D_F(q)$, then we have that $G_F(q)\subseteq D_F(q)\subseteq H_F(q)$.

We will use Laurent series fields to represent ``generic'' extensions of our base field. We recall that every non-zero square class in $F(\!(x)\!)$, the Laurent series field in the variable $x$ over $F$, can be represented by $a$ or $ax$ for some $a\in F^{\x}$, whereby every form over $F(\!(x)\!)$ can be written as $p\perp xq$ for $p$ and $q$ forms over $F$. We will invoke the following result regarding the isotropy of forms over Laurent series fields.

\begin{lemma}\label{Hlemma} Let $p$ and $q$ be forms over $F$. Considering $p\perp x q$ as a form over $F(\!(x)\!)$, we have that $i(p\perp x q)=i(p)+ i(q)$.
\end{lemma}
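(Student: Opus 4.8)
The plan is to diagonalise both forms and reduce to understanding the Witt decomposition of a form over $F(\!(x)\!)$ in terms of its ``residue'' data. Write $p \simeq \langle a_1, \ldots, a_m \rangle$ and $q \simeq \langle b_1, \ldots, b_n \rangle$, so that $p \perp xq \simeq \langle a_1, \ldots, a_m, x b_1, \ldots, x b_n \rangle$ over $L := F(\!(x)\!)$. The first reduction is to replace $p$ and $q$ by their anisotropic parts: if $p \simeq p_{\mathrm{an}} \perp i(p)\times\langle 1,-1\rangle$ and $q \simeq q_{\mathrm{an}} \perp i(q)\times\langle 1,-1\rangle$, then $p \perp xq \simeq p_{\mathrm{an}} \perp x q_{\mathrm{an}} \perp (i(p)+i(q))\times\langle 1,-1\rangle$, since $x\langle 1,-1\rangle \simeq \langle 1,-1\rangle$. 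Hence it suffices to prove that $p_{\mathrm{an}} \perp x q_{\mathrm{an}}$ is anisotropic over $L$, i.e. that $i(p\perp xq) = i(p)+i(q)$ follows once we show the residue forms' anisotropic parts recombine to an anisotropic form.

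The core step is the standard valuation-theoretic argument for $L = F(\!(x)\!)$ with its complete discrete valuation $v$, residue field $F$, and uniformiser $x$. Suppose $r := p_{\mathrm{an}} \perp x q_{\mathrm{an}}$ were isotropic over $L$; take a nontrivial zero $(u, w)$ with $u, w$ vectors over $L$, scaled so that all coordinates lie in $F[\![x]\!]$ and at least one coordinate is a unit. Reducing the equation $p_{\mathrm{an}}(u) + x\, q_{\mathrm{an}}(w) = 0$ modulo $x$ gives $\bar p_{\mathrm{an}}(\bar u) = 0$ in $F$. If $\bar u \neq 0$, this contradicts anisotropy of $p_{\mathrm{an}}$; so $\bar u = 0$, meaning every coordinate of $u$ is divisible by $x$. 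Then $p_{\mathrm{an}}(u)$ is divisible by $x^2$, so $x\, q_{\mathrm{an}}(w)$ is divisible by $x^2$, giving $q_{\mathrm{an}}(w) \equiv 0 \pmod x$, hence $\bar q_{\mathrm{an}}(\bar w) = 0$; anisotropy of $q_{\mathrm{an}}$ forces $\bar w = 0$. But now every coordinate of $u$ is divisible by $x$ and every coordinate of $w$ is divisible by $x$, contradicting the choice that some coordinate was a unit. Therefore $r$ is anisotropic, and $i(p \perp xq) = i(p) + i(q)$.

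I expect the main (though minor) obstacle to be bookkeeping the scaling and the divisibility count correctly: one must be careful that clearing denominators and dividing by the highest common power of $x$ really does leave at least one unit coordinate among the combined vector $(u,w)$, and that the asymmetry between $p_{\mathrm{an}}$ (coefficient a unit) and $x q_{\mathrm{an}}$ (coefficient of valuation one) is handled in the right order — first killing $\bar u$, then using the extra factor of $x$ that appears to kill $\bar w$. Everything else is routine: completeness of $L$ is used only implicitly (in fact the argument works over $F[\![x]\!]$ directly, or even over the henselisation), and no deeper input than the definition of anisotropy over the residue field is needed. Alternatively, one could phrase the same computation via Springer's theorem on complete discretely valued fields, which states precisely that the anisotropic part of $p \perp xq$ is $p_{\mathrm{an}} \perp x q_{\mathrm{an}}$; I would include the self-contained argument above rather than cite Springer, to keep the proof elementary in the spirit of the paper.
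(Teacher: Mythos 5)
The paper states Lemma~\ref{Hlemma} without proof, presenting it as a recalled standard fact; it is essentially Springer's theorem for the complete discretely valued field $F(\!(x)\!)$. Your self-contained argument is the standard one and is correct. The reduction to anisotropic parts via $x\langle 1,-1\rangle \simeq \langle 1,-1\rangle$ is valid, and the two-step residue argument --- reduce mod $x$ to force $\bar u = 0$ using anisotropy of $p_{\mathrm{an}}$ over the residue field $F$, then divide through by $x$ and reduce again to force $\bar w = 0$, contradicting primitivity of the representing vector --- is exactly right, and it correctly yields $i(p\perp xq) = i(p)+i(q)$ by dimension count. One small point of emphasis: completeness of $F(\!(x)\!)$ plays no role at all in this direction of Springer's theorem; the argument establishes anisotropy of $p_{\mathrm{an}} \perp x q_{\mathrm{an}}$ over any field carrying a discrete valuation with uniformiser $x$ and residue field $F$, so the phrase ``used only implicitly'' slightly overstates the dependence (though you acknowledge as much yourself by noting the henselisation suffices). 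Since the paper offers no proof of this lemma, there is no internal argument to compare against, but yours is precisely the expected one.
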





For a form $p$ over $F$ with $\dim p=n\geqs 2$ and $p\not\simeq\<1,-1\>$, the \emph{function field $F(p)$} is the quotient field of the integral domain $F[X_1,\ldots ,X_n]/(p(X_1,\ldots ,X_n))$ (this is the function field of the affine quadric $p(X)=0$ over $F$). To avoid case distinctions, we set $F(p)=F$ if $\dim p= 1$ or $p\simeq\<1,-1\>$. For $p\not\simeq\<1,-1\>$, the field $F(p)$ is a purely-transcendental extension of $F$ if and only if $p$ is isotropic over $F$ (see \cite[Theorem X.$4.1$]{LAM}). We note the inclusion $F(\!(x)\!)(p)\subseteq F(p)(\!(x)\!)$, which we will apply in combination with Lemma~\ref{Hlemma}. For all forms $q$ over $F$ and all extensions $K/F$ such that $p_K$ is isotropic, we have that $i\left(q_{F(p)}\right)\leqs i(q_K)$ in accordance with Knebusch's specialisation results \cite[Proposition 3.1, Theorem 3.3]{Kn1}. In particular, we will invoke the following consequence of Knebusch's specialisation results:

\begin{lemma}\label{Knspec} Let $p$ be an anisotropic form over $F$ that contains a subform $r$ of dimension at least two. For $q$ a form over $F$, we have that $i\left(q_{F(r)}\right)\geqs i\left(q_{F(p)}\right)$.
\end{lemma}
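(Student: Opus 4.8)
The plan is to deduce this from Knebusch's specialisation results, as quoted in the excerpt: for any form $q$ over $F$ and any extension $K/F$ over which $p_K$ is isotropic, one has $i(q_{F(p)}) \leqs i(q_K)$. The idea is to choose $K$ so that $q_K$ behaves like $q_{F(r)}$, which suggests taking $K = F(r)$ and checking that $p_K$ is isotropic, i.e. that $p$ becomes isotropic over $F(r)$.

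First I would observe that since $r$ is a subform of $p$ of dimension at least two, we may write $p \simeq r \perp s$ for some form $s$ (possibly with $\dim s = 0$, in which case $p \simeq r$ and the claim is immediate since $F(r) = F(p)$). Now over the function field $F(r)$, the form $r$ is isotropic by construction (assuming $r \not\simeq \<1,-1\>$ and $\dim r \geqs 2$; the degenerate cases $\dim r = 1$ or $r \simeq \<1,-1\>$ need a moment's separate attention, but in those cases $F(r) = F$ and either $p$ is already isotropic over $F$ — contradicting anisotropy unless it forces the statement trivially — so effectively $\dim r \geqs 2$ and $r \not\simeq \<1,-1\>$ since $p$ is anisotropic). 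Hence $p_{F(r)} \simeq r_{F(r)} \perp s_{F(r)}$ is isotropic over $F(r)$, because a form containing an isotropic subform is itself isotropic.

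With $K = F(r)$ in hand and $p_K$ isotropic, Knebusch's specialisation inequality gives $i(q_{F(p)}) \leqs i(q_{F(r)})$, which is exactly the assertion $i(q_{F(r)}) \geqs i(q_{F(p)})$.

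I do not anticipate a genuine obstacle here; the proof is essentially an application of the stated specialisation principle once one identifies the correct auxiliary extension $K$. The only point requiring a little care is the bookkeeping around the degenerate low-dimensional cases in the definition of the function field (when $\dim r = 1$ or $r \simeq \<1,-1\>$), and the verification that $r \not\simeq \<1,-1\>$ whenever $\dim r \geqs 2$, which follows from the anisotropy of $p$ since any subform of an anisotropic form is anisotropic. Thus the whole argument reduces to: pass to $F(r)$, note $p$ is isotropic there, apply Knebusch.
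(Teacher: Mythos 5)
Your proof is correct and is exactly the argument the paper intends: the lemma is stated as an immediate consequence of the quoted Knebusch specialisation inequality, and you supply the intended derivation by taking $K = F(r)$, observing that $r$ (hence $p$) is isotropic over $F(r)$ since $r$ is anisotropic of dimension at least two (so $r \not\simeq \langle 1,-1\rangle$), and reading off $i(q_{F(p)}) \leqs i(q_{F(r)})$.
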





For $n\in\mathbb{N}$, an \emph{$n$-fold Pfister form} over $F$ is a form isometric to $\<1,a_1\>\otimes\ldots\otimes\<1,a_n\>$ for some $a_1,\ldots ,a_n\in F^{\times}$ (the form $\< 1\>$ is the $0$-fold Pfister form). A form $q$ is a Pfister form over $F$ if and only if it is round over every extension of $F$ \cite[Satz 5, Theorem 2]{P}. A form $\tau$ is a neighbour of a Pfister form $\pi$ if $\tau$ is similar to a subform of $\pi$ and $\dim{\tau}>\frac 1 2 \dim\pi$. As isotropic Pfister forms are hyperbolic \cite[Theorem X.1.7]{LAM}, it follows that $\tau$ is isotropic over $F(\pi)$ for $\tau$ a neighbour of a Pfister form $\pi$ over $F$ (see \cite[Exercise I.16]{LAM}). Hence, in keeping with Knebusch's specialisation results, for $\tau$ a neighbour of a Pfister form $\pi$ and $q$ a form over $F$, we have that $i\left(q_{F(\tau)}\right)=i\left(q_{F(\pi)}\right)$.

\section{Hyperbolicity over function fields}



As above, we let $F$ denote a field of characteristic different from two and consider non-degenerate quadratic forms over $F$ of positive dimension. For $p$ a form over $F$, we address the problem of determining the kernel of the functorial map from the Witt ring of $F$ to the Witt ring of $F(p)$.




\begin{question}\label{hyp} For $p$ a form over $F$, what forms $q$ over $F$ become hyperbolic over $F(p)$?
\end{question}

In keeping with our identifications that $F(p)=F$ if $\dim p= 1$ or $p\simeq\<1,-1\>$, we may restrict our consideration of Question~\ref{hyp} to the case where $\dim p\geqs 2$. For $p\not\simeq\<1,-1\>$ in this situation, one has that $F(p)$ is a purely-transcendental extension of $F$ if and only if $p$ is isotropic over $F$ (see \cite[Theorem X.$4.1$]{LAM}), whereby we may further assume that $p$ is anisotropic.




 


For $p$ anisotropic with $\dim p=2$, we have that $p\simeq b\< 1,-a\>$ for some $a,b\in F^{\times}$, whereby $F(p)$ is a purely-transcendental extension of the quadratic extension $K=F(\sqrt{a})$ per \cite[Example X.3.9]{LAM}. Thus, in this case, it follows that $q_{F(p)}$ is hyperbolic if and only if $q_K$ is hyperbolic. By considering the ``rational'' and ``irrational'' parts of the equation that describes the isotropy of a form over $K=F(\sqrt{a})$, one can recover the following results \cite[Theorems VII.$3.1$, VII.$3.2$]{LAM} due to Pfister \cite{Pf3}.




\begin{theorem}\label{quadratic} Let $q$ be an anisotropic form over $F$. For $K=F(\sqrt{a})$ a quadratic extension of $F$, one has that \begin{enumerate}[$(i)$]
\item $q_K$ is isotropic if and only if $q\simeq b\< 1,-a\>\perp q'$ for $q'$ a form over $F$, $b\in F^{\times}$, 
\item $q_K$ is hyperbolic if and only if $q\simeq \< 1,-a\>\otimes p$ for some form $p$ over $F$.
\end{enumerate}
\end{theorem}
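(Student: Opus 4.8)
The plan is to prove both equivalences by reducing the isotropy/hyperbolicity of $q_K$ to conditions over $F$, exploiting that $K = F(\sqrt a)$ is obtained from $F$ by adjoining a square root. The basic tool is the standard description of quadratic forms under such an extension: a vector over $K$ splits as $u + v\sqrt a$ with $u,v \in F^n$, and the equation $q_K(u+v\sqrt a) = 0$ separates into its ``rational part'' $q(u) + a\,q(v) = 0$ and its ``irrational part'' $b_q(u,v) = 0$, where $b_q$ is the polar form. I would set up this bookkeeping first, noting that $q_K$ is isotropic precisely when there exist $u,v \in F^n$, not both zero, with $q(u) = -a\,q(v)$ and $u \perp v$ (with respect to $b_q$).

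For part $(i)$, the forward direction is the substantive one. Assuming $q_K$ isotropic, I would take $u,v$ as above. If $v = 0$ then $q(u) = 0$ with $u \neq 0$, contradicting anisotropy of $q$; similarly $u \neq 0$. So $u,v$ are nonzero, orthogonal, and $q(u) = -a\,q(v) \neq 0$ (again by anisotropy). Setting $b = q(u)$, the plane spanned by $u$ and $v$ carries the subform $\langle q(u), q(v)\rangle \simeq \langle b, -a^{-1}b\rangle \simeq b\langle 1, -a\rangle$, which is therefore a subform of $q$; orthogonal complement gives $q \simeq b\langle 1,-a\rangle \perp q'$. The converse is immediate: $b\langle 1,-a\rangle$ becomes isotropic (indeed hyperbolic) over $K$, hence so does $q_K$.

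For part $(ii)$, one direction is again trivial: if $q \simeq \langle 1,-a\rangle \otimes p$ then $q_K \simeq \langle 1,-a\rangle_K \otimes p_K$ is hyperbolic since $\langle 1,-a\rangle_K$ is hyperbolic. For the forward direction I would argue by induction on $\dim q$, which is necessarily even. If $q_K$ is hyperbolic then in particular it is isotropic, so by part $(i)$ we may write $q \simeq b\langle 1,-a\rangle \perp q'$. Now $(b\langle 1,-a\rangle)_K$ is hyperbolic, so $q'_K$ is hyperbolic as well (the Witt ring of $K$ has cancellation, or simply compare anisotropic dimensions). If $q'$ has dimension zero we are done with $p = \langle b\rangle$; otherwise $q'$ is anisotropic over $F$ of smaller dimension with $q'_K$ hyperbolic, so by induction $q' \simeq \langle 1,-a\rangle \otimes p'$, and then $q \simeq \langle 1,-a\rangle \otimes (\langle b\rangle \perp p')$, completing the induction.

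The main obstacle, such as it is, lies in handling the degenerate configurations in part $(i)$ cleanly — ruling out $u = 0$ or $v = 0$ and ensuring $q(u) \neq 0$ — all of which rest on the anisotropy hypothesis, and in justifying that the orthogonal pair $u,v$ spans a nondegenerate binary subform isometric to $b\langle 1,-a\rangle$; after that, Witt decomposition and the induction in $(ii)$ are routine. One should also double-check the edge case $\dim q = 2$ directly, where $q_K$ hyperbolic forces $q \simeq b\langle 1,-a\rangle$ and one absorbs the scalar $b$ using $b\langle 1,-a\rangle \simeq \langle 1,-a\rangle \otimes \langle b\rangle$.
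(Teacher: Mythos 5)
Your proof is correct and follows precisely the approach the paper indicates: the paper does not write out a proof, instead citing Lam [Theorems VII.3.1, VII.3.2] and remarking that the result is recovered ``by considering the rational and irrational parts of the equation that describes the isotropy of a form over $K=F(\sqrt{a})$,'' which is exactly your decomposition $q_K(u+v\sqrt{a})=0$ into $q(u)+aq(v)=0$ and $b_q(u,v)=0$, followed by the same Witt-decomposition and induction for part $(ii)$.
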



It is well known that the phenomenon described in Statement $(ii)$ above holds, more generally, for $K$ the function field of an $n$-fold Pfister form (this result is re-proven here as Corollary~\ref{pikernel}). With regard to Question~\ref{hyp}, we note that a description of the forms $q$ follows as a corollary of Theorem~\ref{quadratic}~$(ii)$.







\begin{prop}\label{hypmultiples} An anisotropic form $q$ over $F$ is hyperbolic over $F(p)$ for some form $p$ over $F$ if and only if $q\simeq \pi\otimes r$ for some $n$-fold Pfister form $\pi$ and form $r$ over $F$.\end{prop}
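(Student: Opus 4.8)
The plan is to prove the two implications separately, with the forward direction being essentially formal and the reverse direction being the substantive one. For the reverse direction, suppose $q \simeq \pi \otimes r$ where $\pi$ is an anisotropic $n$-fold Pfister form (we may assume $\pi$ anisotropic, else $q$ is hyperbolic and any $p$ with $\dim p = 1$ works) and $r = \langle c_1, \ldots, c_m \rangle$. I would take $p$ to be the pure part (or some suitable subform) of $\pi$ itself — more precisely, I expect the right choice is $p = \pi$ when $\dim \pi \geqs 2$, using the well-known fact (cited in the excerpt via Corollary~\ref{pikernel}, or provable directly from Theorem~\ref{quadratic}~$(ii)$ by induction on $n$) that any scalar multiple $c\pi$ of a Pfister form, hence any orthogonal sum $\pi \otimes r = \perp_i c_i \pi$, becomes hyperbolic over $F(\pi)$. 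Indeed each $c_i\pi$ is hyperbolic over $F(\pi)$ since $\pi$ is isotropic hence hyperbolic there, so $G_{F(\pi)}(\pi_{F(\pi)}) = F(\pi)^\times$ and $c_i\pi_{F(\pi)} \simeq \pi_{F(\pi)}$ is hyperbolic; summing gives $q_{F(\pi)}$ hyperbolic. One edge case: if $n = 0$ then $q \simeq r$ is arbitrary, and I would need $q$ itself to be hyperbolic over some $F(p)$ — but taking $p \simeq \langle 1, -1 \rangle$ gives $F(p) = F$, which forces $q$ hyperbolic; so the statement implicitly requires $q$ hyperbolic already in that degenerate case, or one simply observes the claim is about the existence of representations with $n \geqs 1$ when $q$ is not hyperbolic. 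I would phrase this cleanly by noting the $n=0$ case corresponds to $p$ isotropic.

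For the forward direction, suppose $q$ is anisotropic and $q_{F(p)}$ is hyperbolic for some form $p$ over $F$. By the reductions already recorded in the section (replacing $p$ by $p_{\mathrm{an}}$, noting $F(p)$ is purely transcendental over $F$ when $p$ is isotropic so then $q$ would be hyperbolic over $F$, contradicting anisotropy unless $\dim q = 0$), we may assume $p$ is anisotropic of dimension at least $2$. Now I would invoke Theorem~\ref{quadratic}~$(ii)$ — or rather its Pfister-form generalisation, but the point is we do not even need the full generalisation: it suffices to run an induction. The cleanest route is: pass to a quadratic subextension. Since $\dim p \geqs 2$, write $p \simeq b\langle 1, -a\rangle \perp p'$, so $p$ contains the binary subform $b\langle 1,-a\rangle$, and by Lemma~\ref{Knspec} the form $q$ becomes hyperbolic over $F(b\langle 1,-a\rangle) = F(\sqrt a)$ (up to purely transcendental extension). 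By Theorem~\ref{quadratic}~$(ii)$, $q \simeq \langle 1, -a\rangle \otimes q_1$ for some form $q_1$ over $F$. Thus $q$ is already a multiple of a $1$-fold Pfister form; the content is that it is a multiple of a Pfister form of the expected size, but the proposition as stated only asserts existence of \emph{some} $n$, so this already completes the forward implication with $n = 1$ (when $q$ is anisotropic and nonzero), or $n=0$ when one allows the degenerate reading.

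The main obstacle, and the place where care is needed, is the bookkeeping around degenerate and edge cases: the interplay between "$q$ hyperbolic over $F$" (forcing $\dim q$ arbitrary but then $q \simeq \langle 1,-1\rangle \otimes r$ trivially, the $n=1$ case with $r$ half of $q$) versus "$q$ anisotropic", and ensuring the statement is read as an "if and only if" where both sides implicitly allow the trivial Pfister form / isotropic $p$. Once the conventions $F(p) = F$ for $\dim p = 1$ or $p \simeq \langle 1,-1\rangle$ are kept firmly in mind, the proof is short: the reverse direction is the hyperbolicity of Pfister multiples over the function field of the Pfister form (Corollary~\ref{pikernel}), and the forward direction is an application of Lemma~\ref{Knspec} to pass to a binary subform followed by Theorem~\ref{quadratic}~$(ii)$. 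I do not expect any genuinely hard computation; the subtlety is purely in getting the quantifier structure and the $n = 0$ boundary case stated correctly.
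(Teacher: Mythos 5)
Your proposal is correct and follows essentially the same route as the paper: for the forward direction, reduce to $p$ anisotropic of dimension at least two, pass to a binary subform $\beta\subseteq p$ via Lemma~\ref{Knspec}, and apply Theorem~\ref{quadratic}~$(ii)$; for the reverse direction, take $p=\pi$ and note that $\pi\otimes r$ is hyperbolic over $F(\pi)$. Your attention to the $n=0$ boundary case is sensible (and indeed the proposition should be read with $n\geqs 1$, as otherwise the ``if'' direction would be vacuous given the convention $F(\langle 1\rangle)=F$), but the substantive content of the argument matches the paper's proof.
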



\begin{proof} If $q\simeq \pi\otimes r$ for $\pi$ an $n$-fold Pfister form and $r$ a form over $F$, then $q_{F(\pi)}$ is hyperbolic in accordance with \cite[Theorem X.1.7]{LAM}. Assuming that $q$ is hyperbolic over $F(p)$, it follows, as above, that $p$ is an anisotropic form over $F$ of dimension at least two. For $\beta$ a $2$-dimensional subform of $p$ over $F$, it follows that $q$ is hyperbolic over $F(\beta)$ in accordance with Lemma~\ref{Knspec}. Invoking Theorem~\ref{quadratic}~$(ii)$, it follows that $q\simeq \beta\otimes r$ for some form $r$ over $F$, establishing the result.
\end{proof}







Regarding Question~\ref{hyp}, one may argue as in the above proof to conclude that $q$ is a multiple of a $2$-fold in the case where $\dim p > 2$, as $3$-dimensional forms are Pfister neighbours. More generally, for $2^k < \dim p\leqs 2^{k+1}$, one can ask whether there exists a $(k+1)$-fold Pfister form $\pi$ over $F$ such that $p_{F(\pi)}$ is isotropic, whereby it would follow that $q$ is a multiple of $\pi$. We refer the reader to Scully's recent work \cite{Scully2} for striking results in this direction.






Given Proposition~\ref{hypmultiples}, one is naturally led to consider the properties of products of forms, and in particular those of multiples of Pfister forms, in connection with Question~\ref{hyp}. In \cite[Proposition $3.1$]{B}, Becher employed elementary properties of quadratic spaces to obtain the following result.

\begin{theorem}\label{kjb} Let $p$ and $q$ be forms over $F$ with $p$ anisotropic. If $p\otimes q$ is isotropic, then $p$ contains a subform $r$ such that $r\otimes q$ is isotropic and $\dim r\leqs \dim q$.
\end{theorem}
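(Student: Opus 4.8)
The plan is to prove this by induction on $\dim q$, exploiting the fact that an isotropy relation for $p \otimes q$ can be rewritten so as to expose a smaller-dimensional subform of $p$ that interacts nontrivially with $q$. Concretely, suppose $p \otimes q$ is isotropic. Writing $q \simeq \langle c \rangle \perp q'$ with $c \in F^\times$ and $\dim q' = \dim q - 1$, an isotropy relation for $p \otimes q = cp \perp (p \otimes q')$ yields vectors giving $c \cdot p(u) + (p \otimes q')(w) = 0$ with $(u,w) \neq 0$. If $u = 0$ then $p \otimes q'$ is already isotropic, and since $p$ is anisotropic we may apply the induction hypothesis to $p \otimes q'$ to obtain a subform $r \subseteq p$ with $r \otimes q'$ isotropic and $\dim r \leqs \dim q' < \dim q$; a fortiori $r \otimes q$ is isotropic (as $q'$ is a subform of $q$), and we are done. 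So we may assume $u \neq 0$, hence $p(u) = a \neq 0$ since $p$ is anisotropic.

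The key move in the remaining case is to use the value $a = p(u) \in D_F(p)$ to split off a hyperbolic plane. Since $p$ represents $a$, we have $p \simeq \langle a \rangle \perp p_1$ for some form $p_1$ with $\dim p_1 = \dim p - 1$. The relation $ca + (p \otimes q')(w) = 0$ shows that $p \otimes q'$ represents $-ca$, so $(p \otimes q') \perp \langle ca \rangle$ is isotropic; equivalently, $\langle 1, -(-c/c')\cdots\rangle$-type manipulations show that $\langle a \rangle \otimes (\langle c\rangle \perp q')$ becomes isotropic in a controlled way. More cleanly: from $a \in D_F(p)$ and the isotropy relation, the subform $\langle a \rangle \otimes q \simeq \langle ca \rangle \perp (\langle a \rangle \otimes q')$ of $p \otimes q$ already satisfies a relation making $\langle a \rangle \otimes q$ isotropic together with part of $p_1 \otimes q'$ — the point being to isolate the one-dimensional subform $r = \langle a \rangle \subseteq p$ and check $\langle a \rangle \otimes q$ is isotropic, which would give $\dim r = 1 \leqs \dim q$ immediately. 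Indeed, if $\langle a \rangle \otimes q$ is isotropic then $r = \langle a \rangle$ works; if not, then since $p \otimes q \simeq (\langle a \rangle \otimes q) \perp (p_1 \otimes q)$ is isotropic while $\langle a \rangle \otimes q$ is anisotropic, a dimension-count forces cancellation that makes $p_1 \otimes q$ interact with $q$, and applying the induction hypothesis to the anisotropic form obtained from $p_1$ (or rather using Witt cancellation to locate isotropy of $p_1 \otimes q'$-type pieces) produces the desired $r \subseteq p_1 \subseteq p$ with $\dim r \leqs \dim q$.

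The main obstacle will be organising the bookkeeping in the second case so that the subform $r$ is genuinely found inside $p$ (not merely inside some auxiliary form) and that the dimension bound $\dim r \leqs \dim q$ is preserved through the inductive step rather than degrading. The cleanest route is likely to phrase the argument entirely in terms of Witt cancellation: from the isotropy relation extract that $(p \otimes q)_{\mathrm{an}}$ has small enough dimension, i.e. $i(p \otimes q) \geqs 1$, and then track how many hyperbolic planes can be attributed to a subform of $p$ of dimension $\leqs \dim q$. One subtlety is the base case $\dim q = 1$: there $p \otimes q = aq$-scaling of $p$, which is anisotropic (as $p$ is), so $p \otimes q$ cannot be isotropic and the statement holds vacuously — this confirms the induction is set up correctly and that the case $u = 0$ versus $u \neq 0$ split is exhaustive and well-founded. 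I would also double-check that the subform $r$ can always be taken with $\dim r \geqs 1$ so the conclusion is non-vacuous, and that passing from "$r \otimes q'$ isotropic" to "$r \otimes q$ isotropic" is valid, which it is since $q' \subseteq q$ implies $r \otimes q' \subseteq r \otimes q$.
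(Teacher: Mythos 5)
The paper itself does not prove this statement; it cites Becher \cite[Proposition 3.1]{B}. Your proposal, however, has a genuine gap that makes it incomplete.

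Your base case and the case $u=0$ are fine. The problem is the case $u\neq 0$: you never actually produce a subform $r$ there. You correctly note that $a=p(u)\in D_F(p)$, that $p\simeq\langle a\rangle\perp p_1$, and that $aq$ is isotropic only when $q$ itself is (the trivial sub-case), but the remaining discussion (``a dimension-count forces cancellation that makes $p_1\otimes q$ interact with $q$'') is not an argument. In general, $p\otimes q\simeq(aq)\perp(p_1\otimes q)$ being isotropic with $aq$ anisotropic does \emph{not} force $p_1\otimes q$ to be isotropic, so you cannot simply pass to $p_1$; and your induction is on $\dim q$, not $\dim p$, so even if you could pass to $p_1$ it would not close the induction. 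You flag the bookkeeping as ``the main obstacle'', which is an accurate self-diagnosis: as written, the second case is a hope rather than a proof.

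The fix is to abandon the induction entirely in favour of the direct linear-algebra argument, which is what makes Becher's result ``elementary'': write $q\simeq\langle b_1,\ldots,b_m\rangle$, so that a nontrivial isotropy vector for $p\otimes q$ is a tuple $(v_1,\ldots,v_m)\in V_p^m$, not all zero, with $\sum_{i=1}^m b_i\,p(v_i)=0$. Let $W=\operatorname{span}(v_1,\ldots,v_m)\subseteq V_p$, so $\dim W\leq m=\dim q$. Since $p$ is anisotropic, $p|_W$ is anisotropic and in particular nondegenerate, hence $r:=p|_W$ is an orthogonal summand of $p$, i.e.\ a genuine subform. The same tuple $(v_1,\ldots,v_m)$ now lies in $W^m$ and witnesses that $r\otimes q$ is isotropic. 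This handles all cases uniformly, with no case split on $u$ and no induction; the hypothesis that $p$ is anisotropic is used exactly once, to guarantee that $W$ carries a nondegenerate restriction and hence a subform.
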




We remark that the above result holds for arbitrary forms $p$ in the case where $q$ is a form of dimension at least two. Theorem~\ref{kjb} generalises \cite[Proposition 2.2]{EL2}, Elman and Lam's important result on the isotropy of multiples of $2$-dimensional forms, which is recalled below.



\begin{cor}\label{EL73} Let $p$ and $q$ be forms over $F$ with $\dim p\geqs 2$ and $\dim q =2$. If $p\otimes q$ is isotropic, then $p$ contains a subform $r$ such that $r\otimes q$ is hyperbolic.
\end{cor}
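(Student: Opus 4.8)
The plan is to deduce Corollary~\ref{EL73} directly from Theorem~\ref{kjb} by exploiting the special structure of a $2$-dimensional form $q$. Applying Theorem~\ref{kjb}, we obtain a subform $r\subseteq p$ with $r\otimes q$ isotropic and $\dim r\leqs\dim q=2$. The case $\dim r=1$ is immediate: then $r\otimes q\simeq cq$ for some $c\in F^{\times}$, so $cq$ is isotropic, hence $q$ itself is isotropic; since $q$ is $2$-dimensional and isotropic, $q\simeq\<1,-1\>$ is hyperbolic, and therefore $r\otimes q\simeq cq$ is hyperbolic. So the substantive case is $\dim r=2$, where we must upgrade ``$r\otimes q$ isotropic'' to ``$r\otimes q$ hyperbolic.''

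For the case $\dim r = 2$: here $r\otimes q$ is a $4$-dimensional form, and it is a tensor product of two binary forms. Writing $q\simeq b\<1,-a\>$, we have that $q$ is similar to a $1$-fold Pfister form, so $r\otimes q\simeq b(r\otimes\<1,-a\>)$, and $r\otimes\<1,-a\>$ is a scalar multiple of a $2$-fold Pfister form (namely, writing $r\simeq c\<1,-d\>$ one gets $r\otimes\<1,-a\>\simeq c(\<1,-d\>\otimes\<1,-a\>)$). Thus $r\otimes q$ is similar to the $2$-fold Pfister form $\<1,-d\>\otimes\<1,-a\>$. Now I invoke the fact, recorded in the excerpt, that an isotropic Pfister form is hyperbolic (\cite[Theorem X.1.7]{LAM}); since similarity and hyperbolicity interact well ($c\varphi$ hyperbolic $\iff$ $\varphi$ hyperbolic), the isotropic form $r\otimes q$, being similar to an isotropic — hence hyperbolic — $2$-fold Pfister form, is itself hyperbolic. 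This completes the argument.

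I expect the main point requiring care — though it is not really an obstacle — to be the bookkeeping that turns the tensor product of two binary forms into a scalar multiple of a $2$-fold Pfister form, and the observation that hyperbolicity is a similarity invariant. One should also double-check the degenerate possibilities: that $\dim r\geqs 1$ always (which is automatic, as $r$ is a form, i.e. of positive dimension), and that no issue arises if $p\otimes q$ were already hyperbolic to begin with. None of these steps uses anything beyond Theorem~\ref{kjb} and the elementary Pfister form facts already cited in the introduction, so the proof will be short.

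Alternatively, one could bypass Theorem~\ref{kjb} entirely and argue via Theorem~\ref{quadratic}~$(ii)$: if $p\otimes q$ is isotropic with $q\simeq b\<1,-a\>$, then $p\otimes q$ is isotropic over $F$, hence a fortiori over $K=F(\sqrt a)$, but over $K$ the form $\<1,-a\>$ is hyperbolic so $(p\otimes q)_K$ is hyperbolic; one then wants to descend to find the binary subform $r$ of $p$ with $r\otimes q$ hyperbolic over $F$. However, locating $r$ by this route essentially re-proves the relevant case of Theorem~\ref{kjb}, so the cleanest presentation is the deduction from Theorem~\ref{kjb} sketched above.
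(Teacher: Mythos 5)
Your proof is correct and follows essentially the same route as the paper: both deduce the result from Theorem~\ref{kjb} and then upgrade the isotropy of the $4$-dimensional form $r\otimes q$ to hyperbolicity. The only difference is in the final step and in the bookkeeping around the degenerate case. The paper assumes WLOG that $q$ is anisotropic (so $\dim r=2$ is forced), and then observes that the $4$-dimensional isotropic form $r\otimes q$ has trivial determinant, hence is hyperbolic; you instead keep $q$ arbitrary, handle the $\dim r=1$ case directly (where $q$ itself must be hyperbolic), and in the $\dim r=2$ case note that $r\otimes q$ is similar to a $2$-fold Pfister form and invoke the fact that an isotropic Pfister form is hyperbolic. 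Both closing arguments are standard and correct; the determinant observation is perhaps slightly more primitive in that it avoids citing Pfister theory, but the overall structure is identical, so neither version is more general than the other.
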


\begin{proof} We may assume, without loss of generality, that $q$ is anisotropic. Invoking Theorem~\ref{kjb}, there exists a form $r\subseteq p$ of dimension at most two such that $r\otimes q$ is isotropic. As $q$ is anisotropic, it follows that $\dim r=2$. As the $4$-dimensional isotropic form $r\otimes q$ is of determinant one, it follows that $r\otimes q$ is hyperbolic.
\end{proof}




Pfister's Theorem~\ref{quadratic}~$(i)$, Elman and Lam's Corollary~\ref{EL73} and Knebusch's Lemma~\ref{Knspec} constitute the ingredients of a proof of the following result on the isotropy of Pfister multiples. This proof (not due to the author) can be found in \cite{OS2} (see Lemma $2.8$ and Corollary $2.9$), but it was known previously. 






\begin{theorem}\label{i1} Let $p$, $q$ and a Pfister form $\pi$ be forms over $F$. Let $K/F$ be any extension such that $\pi\otimes p$ is isotropic over $K$. Then $$i\left((\pi\otimes q)_{K}\right)\geqs (\dim \pi)i\left(q_{F(p)}\right).$$
\end{theorem}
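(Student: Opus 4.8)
The plan is to reduce the statement to the key case where $\pi$ is $1$-fold and then iterate. First I would handle $\dim\pi = 1$, i.e. $\pi \simeq \langle 1 \rangle$: here the assertion is vacuous since $i((\langle 1\rangle \otimes q)_K) = i(q_K)$ and, as $K/F$ is an extension over which $p$ is isotropic, Knebusch's specialisation results (invoked just before Lemma~\ref{Knspec}) give $i(q_K) \geqs i(q_{F(p)})$. Then I would treat the $1$-fold case $\pi \simeq \langle 1, a\rangle$, which is the heart of the matter: I want to show $i((\langle 1,a\rangle\otimes q)_K) \geqs 2\, i(q_{F(p)})$. The general case then follows by writing an $n$-fold Pfister form as $\pi \simeq \langle 1,a\rangle\otimes \pi'$ with $\pi'$ an $(n-1)$-fold Pfister form, applying the $1$-fold case to the pair $(\pi',p)$ over $F$ — noting that $\pi'\otimes(\langle 1,a\rangle\otimes p)$, hence $\langle 1,a\rangle\otimes p$, is isotropic over $K$ — and inducting on $n$; I would need to be a little careful to set up the induction so that the extension $K$ over which the relevant Pfister multiple of $p$ is isotropic stays fixed, which is why Theorem~\ref{i1} is phrased with a general $K$ rather than $K = F(p)$.

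For the $1$-fold case, I would argue over the Laurent series field $L = K(\!(t)\!)$, where I set things up so that isotropy of $q$ over the ``irrational'' quadratic extension $K(\sqrt a)$ is controlled, and then apply Pfister's Theorem~\ref{quadratic}~$(ii)$ in the form: a form becomes hyperbolic over $K(\sqrt a)$ iff it is a multiple of $\langle 1,-a\rangle$. Concretely, the split-off hyperbolic part of $\langle 1,a\rangle\otimes q$ over $K$ should be detectable through the behaviour of $q$ over $K(\sqrt{-a})$ (or $K(\sqrt a)$, with signs chosen to match the conventions here), and I would combine this with Lemma~\ref{Hlemma} and the inclusion $F(\!(x)\!)(p) \subseteq F(p)(\!(x)\!)$ recorded in the introduction to pass between isotropy indices over function fields and over Laurent series fields. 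The point is that the quantity $i(q_{F(p)})$ is the ``amount'' of $q$ that dies generically, and tensoring with the $1$-fold Pfister form doubles this contribution to the Witt index, provided $p$ itself is already split enough over $K$ — which is exactly the hypothesis that $\langle 1,a\rangle\otimes p$ is isotropic over $K$, feeding Corollary~\ref{EL73} or Theorem~\ref{kjb} to locate a $2$-dimensional subform of $\langle 1,a\rangle\otimes(\text{something})$ that is hyperbolic.

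The main obstacle, I expect, is making the bookkeeping in the $1$-fold case genuinely rigorous: one must relate $i((\langle 1,a\rangle\otimes q)_K)$ not merely to whether $q$ is isotropic over some quadratic extension, but to the full Witt index $i(q_{F(p)})$, and this requires iterating the splitting-off process. I anticipate handling this by a secondary induction on $i(q_{F(p)})$: if $i(q_{F(p)}) = 0$ there is nothing to prove; otherwise $q_{F(p)}$ is isotropic, so by the subform-type results (Theorem~\ref{kjb} applied appropriately, together with the purely transcendental behaviour of function fields of isotropic forms) one extracts from $q$ a suitable $2$-dimensional form $\beta$ with $\beta \subseteq q$ such that $\beta \otimes \langle 1,a\rangle$ becomes hyperbolic over $K$ and $q \simeq \beta \perp q''$ with $i(q''_{F(p)}) = i(q_{F(p)}) - 1$ — here Lemma~\ref{Knspec} guarantees that passing to a $2$-dimensional subform does not decrease the relevant index. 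Applying the inductive hypothesis to $q''$ and adding back the $2 = \dim\pi$ hyperbolic planes contributed by $\beta\otimes\langle 1,a\rangle$ over $K$ yields $i((\langle 1,a\rangle\otimes q)_K) \geqs 2 + 2\,i(q''_{F(p)}) = 2\,i(q_{F(p)})$, closing the induction. The interplay between the two inductions (on $n$ and on the Witt index) is the delicate point, but each individual step uses only the elementary ingredients quoted above.
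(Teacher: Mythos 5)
Your high-level plan --- prove the $1$-fold case and then induct on the number of slots of $\pi$ --- is indeed how this result is organised, and your handling of $\dim\pi=1$ via Knebusch's specialisation is fine. However, there are two genuine gaps, one in each half of the argument.

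The inductive step on $n$ as you state it is broken. Writing $\pi\simeq\<1,a\>\otimes\pi'$, you assert ``noting that $\pi'\otimes(\<1,a\>\otimes p)$, hence $\<1,a\>\otimes p$, is isotropic over $K$.'' But ``$\pi'\otimes\alpha$ isotropic implies $\alpha$ isotropic'' is false in general (it is the converse that holds), so you cannot strip $\pi'$ off the hypothesis. The induction has to be set up the other way: apply the $1$-fold case with $\pi'\otimes p$ and $\pi'\otimes q$ in the roles of $p$ and $q$ (the hypothesis $\<1,a\>\otimes(\pi'\otimes p)\simeq\pi\otimes p$ isotropic over $K$ then fits), giving $i\left((\pi\otimes q)_K\right)\geqs 2\,i\left((\pi'\otimes q)_{F(\pi'\otimes p)}\right)$; then apply the $(n-1)$-fold inductive hypothesis to $p$, $q$, $\pi'$ with the extension $K'=F(\pi'\otimes p)$, over which $\pi'\otimes p$ is isotropic by definition, to get $i\left((\pi'\otimes q)_{K'}\right)\geqs 2^{n-1}i\left(q_{F(p)}\right)$. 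Combining closes the induction.

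The more serious gap is in the $1$-fold case itself. You want to extract a binary form $\beta\subseteq q$ over $F$ satisfying simultaneously (a) $\beta\otimes\<1,a\>$ hyperbolic over $K$ and (b) $i\left(q''_{F(p)}\right)=i\left(q_{F(p)}\right)-1$ for $q\simeq\beta\perp q''$, and you appeal to Theorem~\ref{kjb} and Lemma~\ref{Knspec} for this. Neither provides such a $\beta$: Lemma~\ref{Knspec} concerns subforms of $p$, not of $q$; Theorem~\ref{kjb} would only extract a binary subform from $q$ if we already knew $\<1,a\>\otimes q$ isotropic over $K$ (which is precisely what is to be shown), and that subform would live over $K$, not $F$, so condition (b) would be meaningless. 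The extraction must go through $p$, not $q$. Concretely (and in line with the ingredients the paper cites): assuming $\<1,a\>$ and $p$ anisotropic over $K$, apply Corollary~\ref{EL73} to the isotropic form $\<1,a\>\otimes p_K$ to obtain a binary $r\subseteq p_K$ with $r\otimes\<1,a\>$ hyperbolic over $K$; write $r\simeq c\<1,e\>$ and set $L=K(\sqrt{-e})$, so that $K(r)/L$ is purely transcendental, and by Lemma~\ref{Knspec} (together with the fact that Witt index cannot drop under extension) $i(q_L)=i\left(q_{K(r)}\right)\geqs i\left(q_{F(p)}\right)$. Now iterate Theorem~\ref{quadratic}~$(i)$ over $K\subseteq L$ to decompose $(q_K)_{\mathrm{an}}$ as an orthogonal sum of $i(q_L)-i(q_K)$ forms similar to $\<1,e\>$ plus a residue anisotropic over $L$. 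Tensoring with $\<1,a\>$ hyperbolises each $\<1,e\>$-summand (since $\<1,e\>\otimes\<1,a\>$ is hyperbolic over $K$), yielding $i\left((\<1,a\>\otimes q)_K\right)\geqs 2\bigl(i(q_L)-i(q_K)\bigr)+2i(q_K)=2i(q_L)\geqs 2\,i\left(q_{F(p)}\right)$. Your iterative instinct is right, but it is the ``$\<1,e\>$-layers'' of $q_K$ relative to the quadratic extension $L$ that get peeled, not $F$-rational binary subforms of $q$ controlling $i(q_{F(p)})$.
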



With regard to Question~\ref{hyp}, the following result of Fitzgerald \cite[Lemma $3.1$]{F1} places a necessary condition on the forms $q$ (for a given form $p$). We have formulated this result in terms of the set $H_F(q)=\{a\in F^{\times}\mid \< 1,-a\>\otimes q\text{ is isotropic over }F\}$, recalling that $H_F(q)=D_F(q)D_F(q)$. 




\begin{theorem}\label{F31} Let $p$ and $q$ be forms over $F$ with $p$ anisotropic. If $q$ is hyperbolic over $F(p)$, then $H_F(p)\subseteq G_F(q)$. 
\end{theorem}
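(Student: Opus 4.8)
The plan is to exploit Theorem~\ref{i1} with a judicious choice of extension $K/F$, taking $q$ in the role of the generic form whose isotropy over a function field we wish to control. Fix $a\in H_F(p)$; by the description $H_F(p)=D_F(p)D_F(p)$, the form $\<1,-a\>\otimes p$ is isotropic over $F$. We want to conclude that $a\in G_F(q)$, i.e.\ that $\<1,-a\>\otimes q$ is hyperbolic over $F$. The key idea is to play the two Pfister multiples $\pi\otimes p$ and $\pi\otimes q$ off each other, where $\pi=\<1,-a\>$ is a $1$-fold Pfister form: Theorem~\ref{i1} (applied with this $\pi$, with $q$ in place of the theorem's $q$, and with $K=F$, which is indeed an extension over which $\pi\otimes p$ is isotropic) immediately gives
\[
i\bigl((\<1,-a\>\otimes q)_F\bigr)\ \geqs\ 2\,i\bigl(q_{F(p)}\bigr).
\]
Since $q$ is hyperbolic over $F(p)$ by hypothesis, $i\bigl(q_{F(p)}\bigr)=\tfrac12\dim q$, so the right-hand side equals $\dim q$, which is exactly $\tfrac12\dim(\<1,-a\>\otimes q)$. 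Hence $\<1,-a\>\otimes q$ is hyperbolic, that is, $a\in G_F(q)$, and as $a\in H_F(p)$ was arbitrary we obtain $H_F(p)\subseteq G_F(q)$.

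So the proof is essentially a one-line application of Theorem~\ref{i1}, once one has made the right substitutions; the only thing to be careful about is the bookkeeping of which form plays which role in that theorem. Concretely, Theorem~\ref{i1} is stated for ``$p$, $q$ and a Pfister form $\pi$'' with the conclusion bounding $i((\pi\otimes q)_K)$ from below by $(\dim\pi)\,i(q_{F(p)})$ under the hypothesis that $\pi\otimes p$ is isotropic over $K$; here we keep $p$ and $q$ as they are, set $\pi=\<1,-a\>$, and set $K=F$. The hypothesis ``$\pi\otimes p$ isotropic over $K$'' becomes ``$\<1,-a\>\otimes p$ isotropic over $F$'', which is precisely the statement that $a\in H_F(p)$ (using $H_F(p)=D_F(p)D_F(p)$, or simply the definition of $H_F(p)$).

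The main (and really only) obstacle is conceptual rather than technical: recognising that the necessary condition $H_F(p)\subseteq G_F(q)$ is the ``$1$-fold'' shadow of Theorem~\ref{i1}, so that no further input---no manipulation of function fields, no Cassels--Pfister-type representation argument---is needed beyond the isotropy estimate already in hand. Once this is seen, one should also note for completeness that the case $\dim q$ odd cannot occur (a form hyperbolic over any extension has even dimension), and that the edge cases where $p$ or $q$ is small are subsumed, since Theorem~\ref{i1} is stated for arbitrary forms; alternatively one may simply observe that if $q_{F(p)}$ is hyperbolic then in particular $q_{F(p)}$ is isotropic, forcing $\dim q$ even, and proceed as above without case distinctions.
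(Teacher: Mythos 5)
Your proof is correct and is essentially the same as the paper's: the paper states Theorem~\ref{F31} as a known result and then proves its generalisation, Corollary~\ref{EKMgen}, by exactly the argument you give (apply Theorem~\ref{i1} with $\pi=\<1,-a\>$ to get $i\bigl((q\perp -aq)_K\bigr)\geqs 2\,i\bigl(q_{F(p)}\bigr)$, hence hyperbolicity), with $K$ ranging over all extensions of $F$ rather than just $K=F$. Your specialisation to $K=F$ recovers Theorem~\ref{F31}, and your remark about even dimension is fine but unnecessary, since the inequality already forces the Witt index to be at least half the dimension.
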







In \cite[Corollary $22.6$]{EKM}, the above result is stated with respect to anisotropic forms $p$ and $q$. We note that the result holds without any assumptions regarding anisotropy: if $p$ is isotropic, then $F(p)$ is a purely-transcendental extension of $F$, whereby $q$ is hyperbolic over $F$ by supposition, and thus $G_F(q)=F^{\x}$; whereas if $q$ is isotropic and not hyperbolic, we may restrict our attention to $q_{\text{an}}$, without loss of generality, as $G_F(q)=G_F\left(q_{\text{an}}\right)$. Thus, having removed the restrictions regarding anisotropy, the result can be extended to apply to all extensions $K/F$.

Fitzgerald uses \cite[Theorem $1.6$]{F1}, a powerful characterisation of scalar multiples of anisotropic Pfister forms, in combination with the Cassels-Pfister subform theorem and Knebusch's specialization results, to prove Theorem~\ref{F31}. In \cite{EKM}, the result is recovered as a corollary of a number of classical results on the behaviour of forms over rational function fields. By contrast, we prove the aforementioned generalisation of Theorem~\ref{F31} as an immediate corollary of Theorem~\ref{i1}.















\begin{cor}\label{EKMgen} Let $p$ and $q$ be forms over $F$. If $q$ is hyperbolic over $F(p)$, then $H_K(p)\subseteq G_K(q)$ for all extensions $K/F$.
\end{cor}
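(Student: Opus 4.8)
The plan is to derive the statement directly from Theorem~\ref{i1}, applied to a $1$-fold Pfister form, together with Knebusch's specialisation results. Fix an extension $K/F$ and an element $a\in H_K(p)$; the goal is to show that $a\in G_K(q)$, that is, that $\langle 1,-a\rangle\otimes q_K$ is hyperbolic.

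The first step is to check that $q$ is hyperbolic over $K(p_K)$. If $p_K$ is isotropic over $K$, then Knebusch's specialisation results (recalled before Lemma~\ref{Knspec}) give $i(q_K)\geqs i(q_{F(p)})=\frac{1}{2}\dim q$, since $q_{F(p)}$ is hyperbolic by hypothesis; hence $q_K$, and therefore $q_{K(p_K)}$, is hyperbolic. If $p_K$ is anisotropic over $K$, then $p$ is anisotropic over $F$ and the inclusion $F\subseteq K$ induces an inclusion of function fields $F(p)\subseteq K(p_K)$, so $q_{K(p_K)}$ is hyperbolic, being an extension of the hyperbolic form $q_{F(p)}$. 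The degenerate cases $\dim p=1$ (where $F(p)=F$) and $p\simeq\langle 1,-1\rangle$ are absorbed into the isotropic alternative.

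The second step is then immediate. Since $a\in H_K(p)$, the form $\langle 1,-a\rangle\otimes p_K$ is isotropic over $K$, so applying Theorem~\ref{i1} with $K$ in the role of the base field, with Pfister form $\pi=\langle 1,-a\rangle$, and with $K$ itself as the extension, we obtain
$$i(\langle 1,-a\rangle\otimes q_K)\geqs (\dim\pi)\,i(q_{K(p_K)})=2\cdot\frac{1}{2}\dim q=\dim q.$$
Since $\langle 1,-a\rangle\otimes q_K$ has dimension $2\dim q$, it must be hyperbolic over $K$, whence $a\in G_K(q)$, as required.

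I do not anticipate a substantial obstacle: essentially all the content resides in Theorem~\ref{i1} (which itself rests on Pfister's Theorem~\ref{quadratic}~$(i)$, Elman and Lam's Corollary~\ref{EL73}, and Knebusch's Lemma~\ref{Knspec}), and specialising $\langle 1,-a\rangle$ as the Pfister form does the rest. The only point requiring a little care is the first step — passing from the hypothesis over $F(p)$ to hyperbolicity over $K(p_K)$ and keeping track of the conventions around isotropic and one-dimensional $p$ — but this is routine given the specialisation facts recalled in the introduction.
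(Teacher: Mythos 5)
Your proof is correct and takes the same route as the paper: the paper likewise derives the bound $i\left((q\perp -aq)_K\right)\geqs \dim q$ by applying Theorem~\ref{i1} with $\pi=\langle 1,-a\rangle$, merely leaving implicit the base-change observation (that $q$ remains hyperbolic over $K(p_K)$) which you spell out carefully in your first step. One minor slip worth noting: the degenerate case $\dim p=1$ falls under your anisotropic alternative, not the isotropic one (one-dimensional forms are never isotropic, and there $F(p)=F\subseteq K=K(p_K)$ by convention), but this does not affect the argument.
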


\begin{proof} Let $a\in H_K(p)$ for $K/F$. As $p\perp -ap$ is isotropic over $K$, it follows that $$i\left((q\perp -aq)_K\right)\geqs 2i\left(q_{F(p)}\right),$$ in accordance with Theorem~\ref{i1}. Hence, $q\perp -aq$ is hyperbolic over $K$, whereby $a\in G_K(q)$ and thus $H_K(p)\subseteq G_K(q)$.
\end{proof}





We can also recover Fitzgerald's \cite[Theorem $3.2$]{F1} as a corollary of the above result.

\begin{cor}\label{F32} Let $p$, $q$ and $\pi$ be forms over $F$, with $\pi$ an $n$-fold Pfister form. If $q$ is hyperbolic over $F(p)$, then $\pi\otimes q$ is hyperbolic over $F(\pi\otimes p)$.
\end{cor}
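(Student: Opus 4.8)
The plan is to reduce the claim to an application of Corollary~\ref{EKMgen} combined with Theorem~\ref{i1}, exploiting the fact that $\pi \otimes p$ becomes isotropic over $F(\pi \otimes p)$ and that Pfister forms are round over every extension. First I would set $K = F(\pi \otimes p)$ and recall that, by definition of the function field, $\pi \otimes p$ is isotropic over $K$. If $\pi$ is hyperbolic over $F$ there is nothing to prove (both $\pi \otimes q$ and $\pi \otimes p$ are already hyperbolic over $F$, so $F(\pi\otimes p)$ is purely transcendental over $F$ and the conclusion is trivial), so I may assume $\pi$ is anisotropic over $F$; similarly I may pass to $q_{\mathrm{an}}$ and assume $q$ anisotropic, and assume $p$ anisotropic as in the earlier reductions.

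The key step is to apply Theorem~\ref{i1} with the roles: the Pfister form is $\pi$, the ``$p$'' of that theorem is $p$ itself, the ``$q$'' of that theorem is $q$, and the extension is $K = F(\pi \otimes p)$. Since $\pi \otimes p$ is isotropic over $K$ by construction, Theorem~\ref{i1} yields
\[
i\bigl((\pi \otimes q)_K\bigr) \geqs (\dim \pi)\, i\bigl(q_{F(p)}\bigr).
\]
By hypothesis $q$ is hyperbolic over $F(p)$, so $i(q_{F(p)}) = \tfrac{1}{2}\dim q$, and hence $i((\pi\otimes q)_K) \geqs (\dim\pi)\cdot\tfrac{1}{2}\dim q = \tfrac{1}{2}\dim(\pi\otimes q)$. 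Therefore $\pi\otimes q$ is hyperbolic over $K = F(\pi\otimes p)$, which is exactly the assertion.

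I expect the main (and essentially only) subtlety to be the bookkeeping around degenerate cases — verifying that one may assume $\pi$, $p$, and $q$ anisotropic without loss of generality, and checking that the identification $F(\pi\otimes p) = F$ when $\pi\otimes p$ is isotropic does no harm. Once that is dispatched, the argument is a direct substitution into Theorem~\ref{i1}; no further input (not even Corollary~\ref{EKMgen} itself) is strictly needed, though one could alternatively phrase the final isotropy bound via $H_K(\pi\otimes p)$ and Corollary~\ref{EKMgen}. The cleanest writeup is the one-line application of Theorem~\ref{i1} sketched above.
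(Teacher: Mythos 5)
Your argument is correct, and it takes a more direct route than the paper's while resting on the same underlying tool. The paper reduces to the binary case $\pi\simeq\langle 1,-a\rangle$: since $\pi\otimes p\simeq p\perp -ap$ is isotropic over $K=F(\pi\otimes p)$, one has $a\in H_K(p)$, whence Corollary~\ref{EKMgen} gives $a\in G_K(q)$, i.e.\ $\pi\otimes q$ is hyperbolic over $K$; the general statement is then obtained by iterating this binary step $n$ times. You instead feed the full Pfister form $\pi$ into Theorem~\ref{i1} in a single shot, using that $\pi\otimes p$ is isotropic over its own function field, so that $i\bigl((\pi\otimes q)_K\bigr)\geqs(\dim\pi)\,i\bigl(q_{F(p)}\bigr)=\tfrac12\dim(\pi\otimes q)$. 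Since Corollary~\ref{EKMgen} is itself proved in the paper by exactly this kind of one-shot application of Theorem~\ref{i1} (with a binary Pfister form), your proof exposes Corollary~\ref{F32} as a sibling of Corollary~\ref{EKMgen} rather than a consequence of it, and it eliminates the iteration; this is the cleaner derivation. One small point of bookkeeping: your listed reductions (anisotropy of $\pi$, $p$, $q$) do not dispose of the edge case $\dim\pi=\dim p=1$, where $\pi\otimes p$ is a $1$-dimensional form that is never isotropic, so Theorem~\ref{i1} cannot be applied with $K=F(\pi\otimes p)=F$. But there $\pi\simeq\langle 1\rangle$, $F(p)=F(\pi\otimes p)=F$ and $\pi\otimes q=q$, so the conclusion coincides with the hypothesis and is trivial; this case should simply be noted separately rather than subsumed under the anisotropy reductions.
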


\begin{proof} We begin with the case where $\pi\simeq \< 1,-a\>$ for some $a\in F^{\times}$. As $a\in H_K(p)$ for $K=F(\pi\otimes p)$, an invocation of Corollary~\ref{EKMgen} yields that $a\in G_K(q)$, whereby $\pi\otimes q$ is hyperbolic over $F(\pi\otimes p)$. The general statement follows by iterating this argument.
\end{proof}




We may also employ Corollary~\ref{EKMgen} to recover the following characterisation of scalar multiples of Pfister forms due to Wadsworth \cite[Theorem 5]{W} and, independently, Knebusch \cite[Theorem 5.8]{Kn1} (see also \cite[Corollary $23.4$]{EKM}).



\begin{cor}\label{pfisterhypchar} An anisotropic form $q$ over $F$ of dimension at least two is a scalar multiple of a Pfister form if and only if $q$ is hyperbolic over $F(q)$.
\end{cor}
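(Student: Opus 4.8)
The plan is to prove both implications, with the forward direction being essentially free from the classical theory and the reverse direction being where the real work lies.

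For the forward implication, suppose $q \simeq a\pi$ for some $a \in F^{\times}$ and some Pfister form $\pi$ over $F$. Since $q$ is anisotropic, so is $\pi$, and $\dim q \geqs 2$ forces $\pi$ to be an $n$-fold Pfister form with $n \geqs 1$. Over $F(q)$ the form $q$ is isotropic, hence so is $\pi_{F(q)}$, and an isotropic Pfister form is hyperbolic by \cite[Theorem X.1.7]{LAM}; therefore $q_{F(q)} \simeq (a\pi)_{F(q)}$ is hyperbolic. This direction uses nothing beyond the roundness/hyperbolicity of Pfister forms.

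For the reverse implication, assume $q$ is anisotropic of dimension at least two and $q_{F(q)}$ is hyperbolic. Apply Corollary~\ref{EKMgen} with $p = q$: taking $K = F$, we obtain $H_F(q) \subseteq G_F(q)$. Since $q$ is anisotropic of positive dimension, pick $c \in D_F(q)$ and replace $q$ by the similar form $c^{-1}q$, which represents $1$; this does not affect whether $q$ is a scalar multiple of a Pfister form, nor does it change $G_F$. So we may assume $1 \in D_F(q)$, whence $D_F(q) \subseteq H_F(q) \subseteq G_F(q) \subseteq D_F(q)$ (the last inclusion because $1 \in D_F(q)$ gives $G_F(q) \subseteq D_F(q)$, as noted in the introduction). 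Thus $D_F(q) = G_F(q)$, i.e. $q$ is round over $F$. The key remaining point is that the same argument applies over every extension $L/F$: the hypothesis $q_{F(q)}$ hyperbolic is preserved since $L(q_L)$ contains a compositum of $L$ and $F(q)$, so $q_{L(q_L)}$ is hyperbolic, and Corollary~\ref{EKMgen} (or its specialisation to $K = L$) gives $H_L(q_L) \subseteq G_L(q_L)$; running the roundness argument over $L$ shows $q_L$ is round over $L$. Hence $q$ is round over every extension of $F$, and by Pfister's characterisation \cite[Satz 5, Theorem 2]{P} a form that is round over every extension is a Pfister form — applied here to a suitable scalar multiple of $q$ representing $1$.

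The main obstacle is handling the scalar: roundness characterises Pfister forms only among forms representing $1$, so one must be careful to normalise $q$ (by scaling by a represented value) before invoking Pfister's theorem, and to check that this normalisation is harmless for the statement and compatible with base change. A secondary technical point is justifying that the hyperbolicity of $q$ over $F(q)$ ascends to $L(q_L)$ for an arbitrary extension $L/F$; this follows from the inclusion $F(q)\cdot L \subseteq L(q_L)$ together with the fact that hyperbolicity is preserved under field extension. Once these points are in place, the equality $D_L(q_L) = G_L(q_L)$ for all $L/F$ and Pfister's theorem finish the proof. $\qed$
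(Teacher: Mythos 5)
Your proof is correct and follows essentially the same route as the paper: use Corollary~\ref{EKMgen} with $p=q$ to obtain $H_K(q)\subseteq G_K(q)$ for all $K/F$, normalise by a scalar $a\in D_F(q)$ so that $aq$ represents $1$, deduce roundness of $aq$ over every extension, and invoke Pfister's characterisation. The only minor difference is that you re-derive the inclusion over each $L$ via a compositum argument, which is redundant since Corollary~\ref{EKMgen} already asserts the inclusion for all extensions $K/F$ at once.
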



\begin{proof} As isotropic Pfister forms are hyperbolic \cite[Theorem X.1.7]{LAM}, it suffices to prove the ``if'' statement. If $q$ is hyperbolic over $F(q)$, it follows that $H_K(q)\subseteq G_K(q)$ for all extensions $K/F$ by Corollary~\ref{EKMgen}. As $1\in D_F(aq)$ for $a\in D_F(q)$, it follows that $aq$ is round over $K$ for all extensions $K/F$. Invoking Pfister's characterisation of such forms, \cite[Satz 5, Theorem 2]{P}, it follows that $aq$ is a Pfister form.
\end{proof}



We next show that the converse of Corollary~\ref{EKMgen} holds. To do this, we invoke \cite[Theorem $3.3$]{OS2}, a result on the isotropy of multiples of generic Pfister forms over iterated Laurent series fields. We offer a slight generalisation of this result below.


\begin{theorem}\label{itrans} Let $p$ and $q$ be forms over $F$ of dimension at least two. Over $K=F(\!(x_1)\!)\ldots (\!(x_n)\!)$, let $\pi\simeq \< 1,-x_1\>\otimes\ldots\otimes\< 1,-x_n\>$ and consider the forms $\pi\otimes p$ and $\pi\otimes q$. We have that $i\left((\pi\otimes q)_{K(\pi\otimes p)}\right)=(\dim\pi)i\left(q_{F(p)}\right)$.
\end{theorem}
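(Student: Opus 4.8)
The plan is to establish the two inequalities separately. The inequality $i\left((\pi\otimes q)_{K(\pi\otimes p)}\right)\geqs (\dim\pi)i\left(q_{F(p)}\right)$ is essentially immediate: since $\pi\otimes p$ is certainly isotropic over its own function field $L:=K(\pi\otimes p)$, Theorem~\ref{i1} (applied with the extension $L/F$, noting $\pi$ is a Pfister form over $F$ and over $K$) gives $i\left((\pi\otimes q)_L\right)\geqs (\dim\pi)\, i\left(q_{F(p)}\right)$. So the content is entirely in the reverse inequality.

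For the upper bound, the strategy is to realise the generic function field $K(\pi\otimes p)$ inside a well-understood Laurent series tower and then to compute Witt indices level by level using Lemma~\ref{Hlemma}. First I would unpack $\pi\otimes p\simeq (\< 1,-x_1\>\otimes\cdots\otimes\< 1,-x_n\>)\otimes p$ as a form over $K=F(\!(x_1)\!)\cdots(\!(x_n)\!)$, writing it (after collecting square classes $x_i$ versus $1$) in the shape $p'\perp x_n p''$ over $F(\!(x_1)\!)\cdots(\!(x_{n-1})\!)(\!(x_n)\!)$ where $p',p''$ involve only $x_1,\dots,x_{n-1}$; by an inductive/combinatorial bookkeeping of which $\< 1,-x_i\>$ factors contribute a sign, one identifies these pieces as multiples of the $(n-1)$-fold generic Pfister form, setting up a recursion on $n$. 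The key containment to invoke is $F(\!(x)\!)(r)\subseteq F(r)(\!(x)\!)$ (noted in the introduction) together with Knebusch's specialisation results: passing to the function field of $\pi\otimes p$ commutes suitably with adjoining the outermost Laurent variable, so that the Witt index over $K(\pi\otimes p)$ can be bounded above by the Witt index over $\big(F(\!(x_1)\!)\cdots(\!(x_{n-1})\!)((\pi'\otimes p'))\big)(\!(x_n)\!)$ for the appropriate $(n-1)$-fold generic $\pi'$. Then Lemma~\ref{Hlemma} splits this Witt index as a sum over the two halves, and the inductive hypothesis (the $n-1$ case of the theorem, with base case $n=0$ being the trivial identity $i\left(q_{F(p)}\right)=i\left(q_{F(p)}\right)$, or $n=1$ handled directly via Lemma~\ref{Hlemma} as in the original argument of \cite{OS2}) bounds each half by $\tfrac{1}{2}(\dim\pi)\, i\left(q_{F(p)}\right)$, giving the total bound $(\dim\pi)\, i\left(q_{F(p)}\right)$. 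Combined with the lower bound above, equality follows.

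The main obstacle I anticipate is the bookkeeping in the inductive step: one must verify that after writing $\pi\otimes p$ and $\pi\otimes q$ over the outermost $F(\!(x_n)\!)$ as $\alpha\perp x_n\beta$, the forms $\alpha$ and $\beta$ genuinely decompose as $\pi'\otimes p$ and $\pi'\otimes p$ (respectively $\pi'\otimes q$, $\pi'\otimes q$) up to isometry — i.e. that the ``rational'' and ``irrational'' parts in $x_n$ are each the $(n-1)$-fold generic Pfister multiple — and, crucially, that the function-field extension $K(\pi\otimes p)$ sits inside the Laurent series field over the smaller function field in a way compatible with Lemma~\ref{Hlemma} and Knebusch specialisation. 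This requires care because $\pi\otimes p$ is not the generic $(n)$-fold Pfister form itself but a $p$-multiple of it, so the isotropy of $\pi\otimes p$ over the relevant residue field must be tracked through the reduction; the hypothesis $\dim p\geqs 2$ (and $\dim q\geqs 2$) is what keeps all the function fields in play non-trivial and the specialisation inequalities available. Once the decomposition and the field containment are pinned down, the remainder is a routine application of Lemma~\ref{Hlemma} and induction.
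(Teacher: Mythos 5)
Your two-sided strategy matches the intended argument. The paper's own proof simply cites \cite[Theorem 3.3]{OS2} verbatim for the anisotropic case and treats the cases of isotropic $p$ or $q$ separately (reducing to $q_{\text{an}}$, respectively observing that $F(p)/F$ and $K(\pi\otimes p)/K$ are purely transcendental and applying Lemma~\ref{Hlemma}); your induction on $n$ is the natural reconstruction of that cited proof, and it in fact handles all cases uniformly.

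Two points to tighten. For the lower bound, Theorem~\ref{i1} must be applied over the base field $K$ --- where $\pi$ actually is a Pfister form --- yielding $i\left((\pi\otimes q)_{K(\pi\otimes p)}\right)\geqs (\dim\pi)\,i\left(q_{K(p)}\right)$; the inclusion $F(p)\subseteq K(p)$ then gives $i\left(q_{K(p)}\right)\geqs i\left(q_{F(p)}\right)$, which suffices. Phrasing it as an application over $F$, as you do, is not meaningful since $\pi$ is not defined over $F$. For the upper bound, the containment $F(\!(x)\!)(r)\subseteq F(r)(\!(x)\!)$ requires $r$ to be a form over the residue field and so does not apply directly to $\pi\otimes p$, which depends on $x_n$. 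The step that does the work is Knebusch specialisation: writing $K'=F(\!(x_1)\!)\ldots(\!(x_{n-1})\!)$ and $\pi'$ for the corresponding $(n-1)$-fold generic Pfister form, one has $\pi\otimes p\simeq (\pi'\otimes p)\perp -x_n(\pi'\otimes p)$, which is isotropic over $K'(\pi'\otimes p)(\!(x_n)\!)$; hence $i\left((\pi\otimes q)_{K(\pi\otimes p)}\right)\leqs i\left((\pi\otimes q)_{K'(\pi'\otimes p)(\!(x_n)\!)}\right)$, and Lemma~\ref{Hlemma} together with the inductive hypothesis bounds the right side by $(\dim\pi)\,i\left(q_{F(p)}\right)$. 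With those adjustments your proposal is correct and complete.
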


\begin{proof} For $p$ and $q$ anisotropic forms over $F$, the proof of \cite[Theorem $3.3$]{OS2} can be applied verbatim to establish the result. Moreover, the result holds if $q$ is isotropic over $F$, as it suffices to establish the statement with respect to $q_{\text{an}}$ in this case. Finally, if $p$ is isotropic over $F$, we note that $F(p)=F$ in the case where  $p\simeq\< 1,-1\>$ and that $F(p)/F$ is a purely-transcendental extension otherwise. As $K(\pi\otimes p)$ is a purely-transcendental extension of $K$ in all cases, the result follows by invoking Lemma~\ref{Hlemma}.
\end{proof}




Combining Corollary~\ref{EKMgen} with Theorem~\ref{itrans}, we can establish the following characterisation of hyperbolicity over function fields.




 
\begin{theorem}\label{hypchar} Let $p$ and $q$ be forms over $F$ of dimension at least two. The following are equivalent:\begin{enumerate}[$(i)$]
\item $q$ is hyperbolic over $F(p)$,
\smallskip
\item $H_K(p)\subseteq G_K(q)$ for all extensions $K/F$,
\smallskip
\item $q\perp -xq$ is hyperbolic over $F(\!(x)\!)(p\perp -xp)$.
\end{enumerate}
Furthermore, in the case where $1\in D_F(p)$, the preceding statements are equivalent to the following: 
\begin{enumerate}[$(i)$]
\setcounter{enumi}{3}
\item $D_K(p)\subseteq G_K(q)$ for all extensions $K/F$,
\smallskip
\item $q\perp -xq$ is hyperbolic over $F(\!(x)\!)(p\perp -\< x\>)$.
\end{enumerate}
\end{theorem}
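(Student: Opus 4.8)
The plan is to establish the cycle of implications $(i)\Rightarrow(ii)\Rightarrow(iii)\Rightarrow(i)$, and then, under the hypothesis $1\in D_F(p)$, insert $(iv)$ and $(v)$ into the chain. The implication $(i)\Rightarrow(ii)$ is precisely Corollary~\ref{EKMgen}, so nothing new is required there. For $(ii)\Rightarrow(iii)$, I would work over the base field $L=F(\!(x)\!)$ and apply $(ii)$ with $K=L(p\perp -xp)$; the point is that the scalar $x$ lies in $H_K(p)$. Indeed, $p\perp -xp$ is isotropic over $K$ by construction, which says exactly that $x\in H_K(p)$ (recall $H_K(p)=\{a\in K^\times\mid \<1,-a\>\otimes p\text{ is isotropic over }K\}$). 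Then $(ii)$ gives $x\in G_K(q)$, i.e. $\<1,-x\>\otimes q=q\perp -xq$ is hyperbolic over $K=F(\!(x)\!)(p\perp -xp)$, which is $(iii)$.

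The substantive implication is $(iii)\Rightarrow(i)$, and this is where Theorem~\ref{itrans} enters (with $n=1$, $x_1=x$, $\pi=\<1,-x\>$, $\dim\pi=2$). That theorem tells us
\begin{equation*}
i\left((\pi\otimes q)_{K(\pi\otimes p)}\right)=2\,i\left(q_{F(p)}\right),
\end{equation*}
where $K=F(\!(x)\!)$, $\pi\otimes q\simeq q\perp -xq$, and $\pi\otimes p\simeq p\perp -xp$; here we use $\dim p,\dim q\geqs 2$ so the hypothesis of Theorem~\ref{itrans} is met. Statement $(iii)$ asserts that the left-hand side equals $\tfrac12\dim(\pi\otimes q)=\dim q$. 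Hence $2\,i(q_{F(p)})=\dim q$, i.e. $i(q_{F(p)})=\tfrac12\dim q$, which is exactly the assertion that $q_{F(p)}$ is hyperbolic, giving $(i)$. I expect this step to be the main (only real) obstacle, and the care needed is just in lining up the identifications of square classes over $F(\!(x)\!)$ and confirming that $K(\pi\otimes p)$ in the sense of Theorem~\ref{itrans} coincides with $F(\!(x)\!)(p\perp -xp)$ as it appears in $(iii)$.

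For the final part, assume $1\in D_F(p)$. Then $1\in D_K(p)$ for every extension $K/F$, so $D_K(p)\subseteq H_K(p)$ always holds (as noted in the introduction, $G_K(p)\subseteq D_K(p)\subseteq H_K(p)$ when $1$ is represented); thus $(ii)\Rightarrow(iv)$ is trivial. For $(iv)\Rightarrow(v)$, observe that $p\perp -\<x\>$ is isotropic over $K'=F(\!(x)\!)(p\perp-\<x\>)$, which means $x\in D_{K'}(p)$; then $(iv)$ applied with this $K'$ yields $x\in G_{K'}(q)$, i.e. $q\perp -xq$ is hyperbolic over $K'$, which is $(v)$. Finally, to close the loop I would show $(v)\Rightarrow(i)$ by the same device as $(iii)\Rightarrow(i)$: since $1\in D_F(p)$, the form $p\perp -\<x\>$ is a subform of $p\perp -xp$ over $F(\!(x)\!)$, and in fact it is a Pfister neighbour-type subform — more directly, $p\perp -\<x\>$ isotropic implies $p\perp -xp$ isotropic over any common extension, so by Lemma~\ref{Knspec} (or simple specialisation) $i\big((q\perp -xq)_{F(\!(x)\!)(p\perp -xp)}\big)\geqs i\big((q\perp -xq)_{F(\!(x)\!)(p\perp -\<x\>)}\big)$, whence $(v)\Rightarrow(iii)$, and $(iii)\Rightarrow(i)$ has already been shown. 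This routes $(iv)$ and $(v)$ into the equivalence without any further machinery.
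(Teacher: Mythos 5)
Your treatment of the cycle $(i)\Rightarrow(ii)\Rightarrow(iii)\Rightarrow(i)$ is correct and matches the paper's argument precisely: $(i)\Rightarrow(ii)$ is Corollary~\ref{EKMgen}, $(ii)\Rightarrow(iii)$ follows by taking $K=F(\!(x)\!)(p\perp -xp)$ and noting $x\in H_K(p)$, and $(iii)\Rightarrow(i)$ is the $n=1$ case of Theorem~\ref{itrans}. Likewise, your $(ii)\Rightarrow(iv)$ and $(iv)\Rightarrow(v)$ are exactly the paper's. However, there is a genuine error in your closing step $(v)\Rightarrow(i)$. You attempt to prove $(v)\Rightarrow(iii)$ by asserting
\[
i\big((q\perp -xq)_{F(\!(x)\!)(p\perp -xp)}\big)\geqs i\big((q\perp -xq)_{F(\!(x)\!)(p\perp -\<x\>)}\big),
\]
citing Lemma~\ref{Knspec}, but this has the inequality backwards. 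Since $1\in D_F(p)$ makes $p\perp -\<x\>$ a subform of $p\perp -xp$ over $F(\!(x)\!)$, Lemma~\ref{Knspec} gives the \emph{opposite} inequality: the Witt index over the function field of the subform is at least that over the function field of the ambient form. Hyperbolicity over $F(\!(x)\!)(p\perp -\<x\>)$ therefore says nothing directly about the index over $F(\!(x)\!)(p\perp -xp)$. Nor does the ``Pfister neighbour-type'' intuition rescue the step, since $p\perp -xp$ is not a Pfister form in general and $p\perp -\<x\>$ need not become isotropic over $F(\!(x)\!)(p\perp -xp)$.

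The paper closes the loop differently: from $(v)$, note that $p$ itself is a subform of $p\perp -\<x\>$ over $F(\!(x)\!)$, so Lemma~\ref{Knspec} (now used in the correct direction) yields hyperbolicity of $q\perp -xq$ over $F(\!(x)\!)(p)$. The inclusion $F(\!(x)\!)(p)\subset F(p)(\!(x)\!)$ then gives hyperbolicity over $F(p)(\!(x)\!)$, and Lemma~\ref{Hlemma} identifies $i\left((q\perp -xq)_{F(p)(\!(x)\!)}\right)=2\,i\left(q_{F(p)}\right)$, whence $q_{F(p)}$ is hyperbolic, which is $(i)$. Your overall scaffolding is sound, but this final implication needs to be replaced with the paper's three-step argument (or an equivalent one using the correct direction of specialisation).
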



\begin{proof} We first establish the equivalence of the first three statements. Corollary~\ref{EKMgen} establishes that $(i)$ implies $(ii)$, while $(ii)$ clearly implies $(iii)$. Invoking Theorem~\ref{itrans} in the case where $n=1$, one sees that $(iii)$ implies $(i)$. 

In the case where $1\in D_F(p)$, we recall that $D_K(p)\subseteq H_K(p)$ for all extensions $K/F$, whereby $(ii)$ implies $(iv)$. As $(iv)$ clearly implies $(v)$, we conclude by showing that $(v)$ implies $(i)$. Invoking Lemma~\ref{Knspec}, it follows that $q\perp -xq$ is hyperbolic over $F(\!(x)\!)(p)$. Invoking the inclusion $F(\!(x)\!)(p)\subset F(p)(\!(x)\!)$, it follows that $q\perp -xq$ is hyperbolic over $F(p)(\!(x)\!)$, whereby an invocation of Lemma~\ref{Hlemma} establishes $(i)$.
\end{proof}


\begin{remark}\label{Roussey} We remark that Roussey established a related characterisation of isotropy over function fields (see \cite[Th\'eor\`eme 5.1.5]{R}). In particular, for $p$ and $q$ forms over $F$ of dimension at least two, one has that $q$ is isotropic over $F(p)$ if and only if $H_K(p)\subseteq H_K(q)$ for all extensions $K/F$. This result complements an existing characterisation due to Witt \cite{WITT}, Knebusch \cite{Kn73} and Bayer-Fluckiger (see \cite[Th\'eor\`eme 9]{BF}).
\end{remark}

We recall that a characterisation of hyperbolicity over function fields already exists in the literature. As highlighted in \cite[Th\'eor\`eme 9]{BF}, the following special case of Knebusch's result \cite[Theorem $4.2$]{Kn73} offers such a characterisation. Per the introduction, the ``only if'' statement had previously been employed by Arason and Pfister to prove the Hauptsatz in \cite{AP}.


\begin{theorem}\label{Knhyp} Let $p$ and $q$ be anisotropic forms over $F$ of dimension at least two. Suppose that $1\in D_F(p)$ and, for $\dim p=n$, let $K=F(x_1,\ldots ,x_n)$. Then $q$ is hyperbolic over $F(p)$ if and only if $p(x_1,\ldots ,x_n)\in G_K(q)$.
\end{theorem}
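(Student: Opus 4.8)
The plan is to obtain both implications from results already in hand: the ``only if'' part is immediate from Corollary~\ref{EKMgen}, while the ``if'' part will be extracted from the equivalence $(i)\Leftrightarrow(v)$ of Theorem~\ref{hypchar} after locating a copy of the rational function field $F(x_1,\ldots,x_n)$ inside $F(\!(x)\!)(p\perp-\langle x\rangle)$. Using $1\in D_F(p)$ I would first normalise so that $p\simeq\langle 1,a_2,\ldots,a_n\rangle$; then $p(x_1,\ldots,x_n)=x_1^2+a_2x_2^2+\cdots+a_nx_n^2$ is a nonzero element of $K=F(x_1,\ldots,x_n)$ (as $p$ is anisotropic) lying in $D_K(p)$. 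For the ``only if'' direction, if $q$ is hyperbolic over $F(p)$ then $H_K(p)\subseteq G_K(q)$ by Corollary~\ref{EKMgen}; since $1\in D_F(p)\subseteq D_K(p)$ we have $D_K(p)\subseteq D_K(p)D_K(p)=H_K(p)$, and therefore $p(x_1,\ldots,x_n)\in G_K(q)$.

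For the ``if'' direction I would work with $\phi:=p\perp-\langle x\rangle\simeq\langle 1,a_2,\ldots,a_n,-x\rangle$ over $F(\!(x)\!)$; since $\dim\phi=n+1\geqs 3$, and $\phi$ is anisotropic by Lemma~\ref{Hlemma}, the field $F(\!(x)\!)(\phi)$ is a genuine function field. On the affine chart on which the coordinate attached to the entry $-x$ is set equal to $1$, the field $F(\!(x)\!)(\phi)$ is generated over $F(\!(x)\!)$ by elements $t_1,\ldots,t_n$ satisfying $x=t_1^2+a_2t_2^2+\cdots+a_nt_n^2=p(t_1,\ldots,t_n)$. The point I would check carefully is that $t_1,\ldots,t_n$ are algebraically independent over $F$: $n-1$ of them may be taken algebraically independent over $F(\!(x)\!)$, and the last then satisfies over the rational field generated by the others a quadratic equation whose constant term involves $x$ and is hence transcendental over that field. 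This yields an $F$-embedding $\iota\colon F(x_1,\ldots,x_n)\hookrightarrow F(\!(x)\!)(\phi)$ with $x_i\mapsto t_i$ and $p(x_1,\ldots,x_n)\mapsto x$. Assuming now $p(x_1,\ldots,x_n)\in G_K(q)$, i.e.\ $\langle 1,-p(x_1,\ldots,x_n)\rangle\otimes q$ hyperbolic over $K$, applying $\iota$ and extending scalars shows $q\perp-xq=\langle 1,-x\rangle\otimes q$ is hyperbolic over $F(\!(x)\!)(\phi)=F(\!(x)\!)(p\perp-\langle x\rangle)$; by $(v)\Rightarrow(i)$ of Theorem~\ref{hypchar}, $q$ is hyperbolic over $F(p)$.

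The substantive half is the ``if'' direction (Knebusch's contribution), and within it the delicate point is the affine-chart description together with the algebraic independence of $t_1,\ldots,t_n$ over $F$ — this is what legitimises $\iota$ and makes it carry the ``generic value'' $p(x_1,\ldots,x_n)$ onto the Laurent parameter $x$, after which Theorem~\ref{hypchar} supplies the conclusion. As a self-contained alternative to invoking $(v)\Rightarrow(i)$ one can argue directly: with $L=F(x_2,\ldots,x_n)$ and $\mu=-(a_2x_2^2+\cdots+a_nx_n^2)$ one has $F(p)\cong L(\sqrt\mu)$ and $p(x_1,\ldots,x_n)=x_1^2-\mu$ in $K$; embedding $K\hookrightarrow L(\sqrt\mu)(\!(t)\!)$ via $x_1\mapsto\sqrt\mu+t$ sends $x_1^2-\mu$ into the square class of $2\sqrt\mu\,t$, so $\langle 1,-p(x_1,\ldots,x_n)\rangle\otimes q$ becomes $q_{L(\sqrt\mu)}\perp t\,(-2\sqrt\mu\,q)_{L(\sqrt\mu)}$, and since this is hyperbolic and its two residue forms are similar, Lemma~\ref{Hlemma} forces $q_{L(\sqrt\mu)}=q_{F(p)}$ to be hyperbolic.
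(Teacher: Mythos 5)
Your proof is correct, and it actually does more than the paper does at this point: the paper states Theorem~\ref{Knhyp} as Knebusch's result and simply cites \cite{Kn73}, observing in Remark~\ref{recover} only that the ``only if'' direction is recoverable from the implication $(i)\Rightarrow(iv)$ of Theorem~\ref{hypchar}. Your ``only if'' argument matches that remark exactly: $1\in D_F(p)$ gives $p(x_1,\ldots,x_n)\in D_K(p)\subseteq H_K(p)\subseteq G_K(q)$, the last inclusion by Corollary~\ref{EKMgen}. The substantive ``if'' direction is where you go beyond the paper, by constructing an explicit $F$-embedding $\iota\colon K=F(x_1,\ldots,x_n)\hookrightarrow F(\!(x)\!)(p\perp-\langle x\rangle)$ carrying $p(x_1,\ldots,x_n)$ to $x$. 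The algebraic-independence check is sound: after normalising $p\simeq\langle 1,a_2,\ldots,a_n\rangle$ via $1\in D_F(p)$, the chart coordinates $t_1,\ldots,t_n$ satisfy $p(t)=x$; $t_2,\ldots,t_n$ are a transcendence basis over $F(\!(x)\!)$, and since $x$ is transcendental over $F(t_2,\ldots,t_n)$, so is $t_1^2=x-a_2t_2^2-\cdots-a_nt_n^2$, hence so is $t_1$. Transferring hyperbolicity of $\langle 1,-p(x_1,\ldots,x_n)\rangle\otimes q$ along $\iota$ yields statement $(v)$ of Theorem~\ref{hypchar}, which gives $(i)$. Your alternative computation, realising $F(p)\cong L(\sqrt{\mu})$ with $L=F(x_2,\ldots,x_n)$, embedding $K\hookrightarrow L(\sqrt{\mu})(\!(t)\!)$ via $x_1\mapsto\sqrt{\mu}+t$, and then reading off hyperbolicity of $q_{L(\sqrt{\mu})}$ from $q\perp(-2\sqrt{\mu}t)q$ via Lemma~\ref{Hlemma}, is a clean self-contained variant of the same specialisation idea that bypasses Theorem~\ref{hypchar} entirely; it is essentially a direct proof that $(v)$ of Theorem~\ref{hypchar} implies $(i)$ in this generic setting. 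Either version closes the loop the paper leaves implicit, showing that Theorem~\ref{hypchar} genuinely contains Knebusch's Theorem~\ref{Knhyp} rather than being merely an ``analogue'' of it.
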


\begin{remark}\label{recover} We note that the ``only if'' statement in the above result can be recovered from Theorem~\ref{hypchar}, by invoking the fact that Statement $(i)$ implies Statement $(iv)$. The equivalence of Statements $(i)$ and $(v)$ in Theorem~\ref{hypchar} might reasonably be regarded as the function-field analogue of Theorem~\ref{Knhyp}.
\end{remark}

The equivalence of the first three statements in the following result constitutes the Third Representation Theorem of Cassels and Pfister \cite{P} (it is also known as the Subform Theorem). We next establish that the subform property can also be characterised using function fields.




\begin{theorem}\label{rep}
Let $p$ and $q$ be anisotropic quadratic forms over $F$ with $\dim p=n$ for $n\in\mathbb{N}$. The following are equivalent:
\begin{enumerate}[$(i)$]
\item $p\subseteq q$,
\smallskip
\item $D_K\left(p\right) \subseteq D_K\left(q\right)$ for every extension $K/F$, 
\smallskip
\item $p(x_1,\ldots ,x_n)\in D_{F(x_1,\ldots ,x_n)}\left(q\right)$.
\smallskip
\item $q\perp\< -x\>$ is isotropic over $F(\!(x)\!)(p\perp \< -x\>)$.
\end{enumerate}
\end{theorem}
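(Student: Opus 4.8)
The plan is to establish the cycle $(i) \Rightarrow (ii) \Rightarrow (iii) \Rightarrow (iv) \Rightarrow (i)$, leveraging the Third Representation Theorem (which gives $(i) \Leftrightarrow (ii) \Leftrightarrow (iii)$ for free) so that the only genuinely new content is the equivalence of $(iv)$ with the others. The implication $(i) \Rightarrow (iv)$ should be essentially immediate: if $p \subseteq q$, then $p \perp \langle -x\rangle \subseteq q \perp \langle -x\rangle$ over $F(\!(x)\!)$, so $q \perp \langle -x\rangle$ is isotropic over any extension over which $p \perp \langle -x\rangle$ is isotropic, in particular over $F(\!(x)\!)(p \perp \langle -x\rangle)$ (the function field of a form makes that form isotropic, provided it is not isometric to $\langle 1,-1\rangle$, which holds here since $\dim(p \perp \langle -x\rangle) = n+1 \geqs 3$). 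The reverse direction $(iv) \Rightarrow (iii)$ is where the work lies, and it will proceed by specialisation in the style of the proof of Theorem~\ref{hypchar}~$(v)\Rightarrow(i)$.

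For $(iv) \Rightarrow (iii)$: I would first dispose of the case where $p \perp \langle -x\rangle$ is isotropic over $F(\!(x)\!)$; but $p$ is anisotropic over $F$, so by Lemma~\ref{Hlemma} the form $p \perp \langle -x\rangle$ has Witt index $i(p) + i(\langle -1\rangle) = 0$ over $F(\!(x)\!)$, hence is anisotropic, and in particular not isometric to $\langle 1,-1\rangle$, so $F(\!(x)\!)(p \perp \langle -x\rangle)$ is a genuine (non-purely-transcendental) function-field extension. Now $p \perp \langle -x\rangle$ contains the anisotropic subform $p$ of dimension $n \geqs 2$ (if $n=1$ one argues directly, or notes $p$ still has a $1$-dimensional... actually one needs $\dim \geqs 2$ here, so for $n=1$ handle separately: then $p \simeq \langle a\rangle$, $p \subseteq q \Leftrightarrow a \in D_F(q)$, and $(iv)$ reads that $q \perp \langle -x\rangle$ is isotropic over $F(\!(x)\!)(\langle a, -x\rangle) = F(\!(x)\!)(\sqrt{ax})$-related field, handled by Theorem~\ref{quadratic}~$(i)$ and extracting the rational part). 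For $n \geqs 2$, applying Lemma~\ref{Knspec} to the anisotropic form $p \perp \langle -x\rangle$ over $F(\!(x)\!)$ and its subform $p$, the hypothesis $(iv)$ gives that $q \perp \langle -x\rangle$ is isotropic over $F(\!(x)\!)(p)$. Then, via the inclusion $F(\!(x)\!)(p) \subseteq F(p)(\!(x)\!)$, the form $q \perp \langle -x\rangle$ is isotropic over $F(p)(\!(x)\!)$, and Lemma~\ref{Hlemma} forces $q_{F(p)}$ to be isotropic. Finally, $q$ isotropic over $F(p)$ is exactly the condition that $p$ is "represented" generically: more precisely, $q_{F(p)}$ isotropic together with $q$ anisotropic over $F$ should yield, by a Cassels--Pfister style argument (as in the classical proof that isotropy over $F(p)$ implies $D$-containment of the generic value), that $p(x_1,\ldots,x_n) \in D_{F(x_1,\ldots,x_n)}(q)$, which is $(iii)$.

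The step I expect to be the main obstacle is pinning down the passage from "$q$ isotropic over $F(p)$, $q$ anisotropic over $F$" to statement $(iii)$ cleanly, since this is the direction of Cassels--Pfister that is usually packaged inside the Subform Theorem itself — one must be careful not to argue circularly. The cleanest route is probably to invoke a result of Izhboldin (alluded to in the introduction as giving "a function-field characterisation of the subform property") or, failing a citable statement, to reproduce the standard argument: $q_{F(p)}$ isotropic means there is a nonzero $(f_1,\ldots) \in F(X_1,\ldots,X_n)^{\dim q}$ with $q(f) \equiv 0 \pmod{p(X)}$, clear denominators and apply the Cassels--Pfister division/substitution lemma to conclude that $q$ represents $p(X)$ over $F(X_1,\ldots,X_n)$ up to a square — and then the fact that $p(X)$ is represented up to a square already suffices because $p(X)$ is a "generic" value and one can absorb the square factor. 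I would cite \cite{R} or \cite{EKM} for the precise form of this standard implication rather than redo it, flagging that the remaining equivalences $(i)\Leftrightarrow(ii)\Leftrightarrow(iii)$ are the Third Representation Theorem and require no new proof.
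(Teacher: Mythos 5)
Your treatment of $(i)\Leftrightarrow(ii)\Leftrightarrow(iii)$ (the Third Representation Theorem) and of $(i)\Rightarrow(iv)$ is fine and agrees in substance with the paper, which shows $(ii)\Rightarrow(iv)$ instead. The gap is in your argument for $(iv)\Rightarrow(iii)$, and it is not merely the acknowledged awkwardness at the end: the information loss occurs earlier. When you apply Lemma~\ref{Knspec} with the subform $r=p$ of $p\perp\langle -x\rangle$, you replace $F(\!(x)\!)(p\perp\langle -x\rangle)$ by $F(\!(x)\!)(p)$, thereby discarding exactly the datum that $p$ \emph{represents} the generic scalar $x$, which is the whole content of the subform property. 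After the subsequent passage through $F(p)(\!(x)\!)$ and Lemma~\ref{Hlemma}, you are left only with ``$q_{F(p)}$ is isotropic.'' This is strictly weaker than $(iii)$: taking $F=\mathbb{R}$, $p=\langle 1,1,1\rangle$ and $q=\langle -1,-1,-1\rangle$, both are anisotropic, $q\simeq -p$ is isotropic over $F(p)$, yet $-\bigl(x_1^2+x_2^2+x_3^2\bigr)$ is certainly not a sum of three squares in $\mathbb{R}(x_1,x_2,x_3)$, so $(iii)$ fails. Relatedly, your ``standard argument'' at the end is miscalibrated: isotropy of $q$ over $F(p)$ yields, via Cassels--Pfister substitution, that $cp(X)\in D_{F(X)}(q)$ for \emph{some} $c\in F^\times$, not that $p(X)$ is represented ``up to a square,'' so there is no square factor to absorb.

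The correct route is the one you mention only in passing and which the paper takes: do not detour through $F(\!(x)\!)(p)$ at all, but keep the generic representation variable. The paper invokes Izhboldin's \cite[Lemma 5.4 (4)]{Ilow} directly on the hypothesis $(iv)$ to conclude that $p\perp\langle -x\rangle$ is \emph{similar to a subform} of $q\perp\langle -x\rangle$ over $F(\!(x)\!)$, and then uses the square-class structure of $F(\!(x)\!)$ together with Lemma~\ref{Hlemma} to strip off the similarity factor and descend to $p\subseteq q$ over $F$ (hence $(i)$). Your proposal would be repaired by replacing the entire $(iv)\Rightarrow(iii)$ reduction with this citation and descent step, proving $(iv)\Rightarrow(i)$ instead of $(iv)\Rightarrow(iii)$.
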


\begin{proof} Assuming the Third Representation Theorem, we must show that the first three equivalent statements are also equivalent to $(iv)$. As $(ii)$ clearly implies $(iv)$, it suffices to prove that $(iv)$ implies $(i)$. Invoking \cite[Lemma 5.4 $(4)$]{Ilow}, it follows that $p\perp \< -x\>$ is similar to a subform of $q\perp \< -x\>$ over $F(\!(x)\!)$. As the non-zero square classes of $F(\!(x)\!)$ can be represented by $a$ or $ax$ for some $a\in F^{\x}$, we may invoke Lemma~\ref{Hlemma} to conclude that $p$ is a subform of $q$ over $F$. 
\end{proof}

\begin{remark}\label{nonind} We note that the above proof that Statement $(iv)$ characterises the subform property is dependent on the Third Representation Theorem, as this result is invoked in Izhboldin's proof of \cite[Lemma 5.4 $(4)$]{Ilow}.
\end{remark}








Invoking the Third Representation Theorem of Cassels and Pfister (incorporated in the preceding result), we next observe that the Cassels-Pfister Subform Theorem follows as a direct corollary of Corollary~\ref{EKMgen}. This proof differs from the original proofs of the result, which were independently obtained by Wadsworth \cite[Theorem 2]{W} and Knebusch \cite[Lemma 4.5]{Kn1}, and the proofs of the theorem presented in \cite{K}, \cite{EKM}, \cite{LAM} and \cite{S}, all of which also rely on the Third Representation Theorem.







\begin{cor}\label{cpst} Let $p$ and $q$ be anisotropic forms over $F$ of dimension at least two. If $q$ is hyperbolic over $F(p)$, then $ap\subseteq bq$ for all $a\in D_F(p)$ and $b\in D_F(q)$. 
\end{cor}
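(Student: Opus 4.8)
The plan is to reduce the statement to the subform property and then apply the Third Representation Theorem (as incorporated in Theorem~\ref{rep}). First I would fix $a\in D_F(p)$ and $b\in D_F(q)$ and aim to show $ap\subseteq bq$; equivalently, replacing $p$ by $ap$ and $q$ by $bq$ (which changes neither $F(p)$ — since $F(ap)=F(p)$ — nor the hyperbolicity of $q$ over $F(p)$, as $G$ is unaffected), I may assume $1\in D_F(p)$ and $1\in D_F(q)$ and reduce to proving $p\subseteq q$. Here the point of normalising to $1\in D_F(p)$ is precisely that it unlocks the equivalence with statements~$(iv)$ and $(v)$ in Theorem~\ref{hypchar}.

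Next I would invoke Corollary~\ref{EKMgen}: since $q_{F(p)}$ is hyperbolic, we have $H_K(p)\subseteq G_K(q)$ for every extension $K/F$, and in particular, working over $K=F(x_1,\ldots,x_n)$ where $n=\dim p$, the element $p(x_1,\ldots,x_n)$ lies in $H_K(p)$ — indeed it lies in $D_K(p)$, as $p$ represents its own generic value — hence $p(x_1,\ldots,x_n)\in G_K(q)$. Now the key observation is that $1\in D_F(q)$ implies $1\in D_K(q)$, so $G_K(q)\subseteq D_K(q)$, whence $p(x_1,\ldots,x_n)\in D_{F(x_1,\ldots,x_n)}(q)$. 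This is precisely Statement~$(iii)$ of Theorem~\ref{rep}, so the equivalence recorded there (i.e.\ the Third Representation Theorem of Cassels and Pfister) yields $p\subseteq q$, which is the desired conclusion after undoing the scaling.

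The only real point requiring care is the passage $G_K(q)\subseteq D_K(q)$, which genuinely needs $1\in D_K(q)$ and is why the initial normalisation by $b\in D_F(q)$ is essential rather than cosmetic; similarly one should note that $p(x_1,\ldots,x_n)$ being a non-trivial value represented over the rational function field is immediate from the definition of that field and the anisotropy of $p$. I do not expect any serious obstacle here: the substance of the argument is entirely carried by Corollary~\ref{EKMgen} and the Third Representation Theorem, and the remainder is the routine bookkeeping of scalar multiples and the elementary inclusions $G\subseteq D$ (when $1$ is represented) and $D\subseteq H$ recalled in the introduction.
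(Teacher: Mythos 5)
Your proof is correct and follows essentially the same route as the paper: normalise by $a$ and $b$ so that $1$ is represented, apply Corollary~\ref{EKMgen} to get $H_K(p)\subseteq G_K(q)$, slot in the elementary inclusions $D_K(p)\subseteq H_K(p)$ and $G_K(q)\subseteq D_K(q)$ (both requiring that $1$ be represented), and finish via Theorem~\ref{rep}. The only cosmetic difference is that you specialise to $K=F(x_1,\ldots,x_n)$ and invoke statement~$(iii)$ of Theorem~\ref{rep}, whereas the paper keeps $K$ arbitrary, deduces $D_K(p)\subseteq D_K(q)$ for all $K/F$, and invokes statement~$(ii)$; these are interchangeable since Theorem~\ref{rep} records their equivalence. (A small side remark: your parenthetical about the normalisation ``unlocking'' statements~$(iv)$ and~$(v)$ of Theorem~\ref{hypchar} is a red herring, since you never use that theorem; the normalisation is needed exactly for the inclusions $D\subseteq H$ and $G\subseteq D$.)
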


\begin{proof} Let $a\in D_F(p)$ and $b\in D_F(q)$. Since $q$ is hyperbolic over $F(p)$, it follows that $bq$ is hyperbolic over $F(ap)$. Hence, by Corollary~\ref{EKMgen}, it follows that $H_K(ap)\subseteq G_K(bq)$ for all extensions $K/F$. As $1\in D_F(ap)\cap D_F(bq)$, it follows that $D_K(ap)\subseteq H_K(ap)$ and $G_K(bq)\subseteq D_K(bq)$. Thus, $D_K(ap)\subseteq D_K(bq)$ for all extensions $K/F$, whereby the result follows from Theorem~\ref{rep}.
\end{proof}

%




As is well known, the Cassels-Pfister Subform Theorem may be invoked to recover the following answer to Question~\ref{hyp} in the case where $p$ is similar to a Pfister form. This result, implicit in Arason and Pfister's work \cite{AP}, was first established by Elman and Lam \cite{EL}, and it holds, more generally, for $p$ a neighbour of a Pfister form.

\begin{cor}\label{pikernel} Let $p$ and $q$ be anisotropic forms over $F$, with $p$ a neighbour of a Pfister form $\pi$ over $F$. If $q$ is hyperbolic over $F(p)$, then $q\simeq \pi\otimes r$ for some form $r$ over $F$. 
\end{cor}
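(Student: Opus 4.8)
The plan is to reduce to the already-established case where $p$ itself is a Pfister form, and then invoke the Cassels--Pfister Subform Theorem via Corollary~\ref{cpst}. First I would recall that, since $p$ is a neighbour of the Pfister form $\pi$, Knebusch's specialisation results (as noted in the final paragraph of Section~1) give $i(q_{F(p)})=i(q_{F(\pi)})$; hence the hypothesis that $q$ is hyperbolic over $F(p)$ is equivalent to $q$ being hyperbolic over $F(\pi)$. So without loss of generality we may replace $p$ by $\pi$ and assume $p\simeq\pi$ is an $n$-fold Pfister form. We may also harmlessly scale $\pi$ so that $1\in D_F(\pi)$, which is automatic for a Pfister form.

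Next I would apply Corollary~\ref{cpst} with the pair $(\pi,q)$: since $\pi$ and $q$ are anisotropic of dimension at least two and $q$ is hyperbolic over $F(\pi)$, we obtain $a\pi\subseteq bq$ for all $a\in D_F(\pi)$ and $b\in D_F(q)$. Taking $a=1$ and fixing any $b\in D_F(q)$, this yields $\pi\subseteq bq$, so we may write $bq\simeq \pi\perp s$ for some form $s$ over $F$ (or $bq\simeq\pi$ if $\dim q=\dim\pi$, in which case we are done with $r\simeq\langle b^{-1}\rangle$). The key point is that $q$ hyperbolic over $F(\pi)$ forces $s$ to be hyperbolic over $F(\pi)$ as well, since $\pi$ itself is hyperbolic over $F(\pi)$ and the Witt class of $bq$ over $F(\pi)$ is zero. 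If $\dim s<\dim\pi$ (in particular if $s$ is anisotropic), then $s$ cannot be hyperbolic over $F(\pi)$ unless $s=0$, because a nonzero form of dimension strictly less than $\dim\pi$ that is hyperbolic over $F(\pi)$ would have to be isotropic over $F$ already (by the subform/Hauptsatz-type bound on Witt kernels, e.g. Corollary~\ref{cpst} applied again, or directly: its anisotropic part is a nonzero multiple of $\pi$ of dimension $<\dim\pi$, impossible). Iterating — replacing $bq$ by $s_{\mathrm{an}}$ and peeling off another copy of $\pi$ each time — the dimension drops by $\dim\pi$ at each stage, so the process terminates, expressing $bq$ as an orthogonal sum $\pi\otimes r'$ after reassembling the scalars of the successive Pfister copies (here one uses that $\pi\otimes\langle c_1,\dots,c_k\rangle\simeq c_1\pi\perp\dots\perp c_k\pi$ and that each peeled-off summand is similar to $\pi$). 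Finally, $q\simeq \pi\otimes(b^{-1}r')=:\pi\otimes r$.

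The main obstacle is the inductive peeling step: one must be careful that after writing $bq\simeq\pi\perp s$ with $s$ hyperbolic over $F(\pi)$, the anisotropic part $s_{\mathrm{an}}$ is again a form of the same type to which Corollary~\ref{cpst} applies — i.e. that $s_{\mathrm{an}}$ is anisotropic over $F$, of dimension at least two, and hyperbolic over $F(\pi)$ — and that its representable elements let us again extract a subform similar to $\pi$. The cleanest way to handle this uniformly is to argue on Witt classes: in the Witt ring $W(F)$, the class of $q$ lies in the kernel of $W(F)\to W(F(\pi))$, and one shows by induction on $\dim q_{\mathrm{an}}$ that every anisotropic representative of such a class is of the form $\pi\otimes r$, the base of the induction being $q=0$ and the inductive step being exactly the application of Corollary~\ref{cpst} described above. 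Edge cases to dispatch: $\dim p=1$ is excluded by hypothesis ($\dim p\geqslant\tfrac12\dim\pi\geqslant 1$, and a $1$-dimensional neighbour forces $\pi=\langle1\rangle$, handled trivially), and $q$ isotropic is excluded since $q$ is assumed anisotropic.
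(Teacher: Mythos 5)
Your proposal is correct and follows the same route as the paper: pass from $F(p)$ to $F(\pi)$ via Knebusch's specialisation results, apply Corollary~\ref{cpst} to split off a summand similar to $\pi$, and finish by induction on the dimension. The paper's version is more terse (it does not scale to $a=1$ and leaves the termination of the induction implicit), but the mathematical content is identical; the only slip in your write-up is the parenthetical ``(in particular if $s$ is anisotropic)'', which is a non sequitur — $s$ is automatically anisotropic as a subform of the anisotropic $bq$, and that fact has nothing to do with $\dim s<\dim\pi$.
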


\begin{proof} As $q$ is hyperbolic over $F(p)$, it follows that $q$ is hyperbolic over $F(\pi)$ by Knebusch's specialisation results. Applying Corollary~\ref{cpst}, we have that $a\pi\subseteq bq$ for some $a,b\in F^{\times}$, whereby $q\simeq ab\pi\perp q'$ for some form $q'$ over $F$. As $\pi$ is hyperbolic over $F(\pi)$, it follows that $q'$ is hyperbolic over $F(\pi)$, whereby an induction argument establishes the result.
\end{proof}

Invoking the fact that $n$-fold Pfister forms additively generate $I^n(F)$, the $n$th power of the fundamental ideal of even-dimensional forms in the Witt ring $W(F)$, the Hauptsatz of Arason and Pfister \cite[Hauptsatz and Kor. 3]{AP}, recorded below, can be recovered as a corollary of Corollary~\ref{pikernel} and Corollary~\ref{pfisterhypchar}. We refer to \cite[p. 88]{Hdublin} for a short proof of this fact.



\begin{cor} Let $q$ be a form over $F$ such that $q\in I^n(F)$. If $\dim q< 2^n$, then $q$ is hyperbolic. If $\dim q=2^n$, then $q$ is similar to an $n$-fold Pfister form.\end{cor}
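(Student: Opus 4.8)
The plan is to prove the two assertions together, extracting the second from the first. The case $n=0$ is immediate (a form of dimension $1$ is similar to the $0$-fold Pfister form $\<1\>$, and no form has dimension less than $1$), so I assume $n\geqs 1$. I first claim it suffices to prove, for \emph{every} field $F$ of characteristic different from two, the statement $(\ast)$: if $\phi\in I^n(F)$ is not hyperbolic, then $\dim\phi_{\mathrm{an}}\geqs 2^n$. Granting $(\ast)$: if $\dim q<2^n$ then $\dim q_{\mathrm{an}}<2^n$, so $q$ is hyperbolic; if $\dim q=2^n$ with $q$ isotropic, then $\dim q_{\mathrm{an}}<2^n$ again forces $q$ hyperbolic, and $q$ is then isometric — hence similar — to an $n$-fold Pfister form (the $n$-fold Pfister form $\<1,-1\>\otimes\<1,1\>\otimes\ldots\otimes\<1,1\>$, with $n-1$ factors $\<1,1\>$, being hyperbolic of dimension $2^n$). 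Finally, if $q$ is anisotropic with $\dim q=2^n$, then $q$ is isotropic over $F(q)$, so $q_{F(q)}\in I^n(F(q))$ has anisotropic part of dimension $<2^n$ and is therefore hyperbolic by $(\ast)$ applied over $F(q)$; since $q$ is anisotropic of dimension at least two and hyperbolic over $F(q)$, Corollary~\ref{pfisterhypchar} yields that $q$ is similar to a Pfister form, necessarily $n$-fold as $\dim q=2^n$.

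To prove $(\ast)$ I would induct on the least integer $m\geqs 1$ for which $\phi$ is Witt-equivalent over $F$ to an orthogonal sum $\langle c_1\rangle\pi_1\perp\ldots\perp\langle c_m\rangle\pi_m$, where each $\pi_i$ is an $n$-fold Pfister form over $F$ and $c_i\in F^{\times}$; such an expression exists because the $n$-fold Pfister forms additively generate $I^n(F)$ and $-\pi=\langle -1\rangle\otimes\pi$ in $W(F)$ (cf.\ \cite[p.~88]{Hdublin}). Since an isotropic Pfister form is hyperbolic \cite[Theorem X.1.7]{LAM}, minimality of $m$ forces every $\pi_i$ occurring to be anisotropic over $F$. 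If $m=1$, then $\phi$ and $c_1\pi_1$ are anisotropic forms with the same Witt class, so $\phi\simeq c_1\pi_1$ and $\dim\phi_{\mathrm{an}}=2^n$. If $m\geqs 2$, set $\rho=\pi_m$ and pass to $K=F(\rho)$; then $\rho_K$ is hyperbolic, so $\phi_K$ is Witt-equivalent over $K$ to $\langle c_1\rangle(\pi_1)_K\perp\ldots\perp\langle c_{m-1}\rangle(\pi_{m-1})_K$, an expression of the above type with at most $m-1$ summands. If $\phi_K$ is not hyperbolic, the induction hypothesis gives $\dim(\phi_K)_{\mathrm{an}}\geqs 2^n$, whence $\dim\phi_{\mathrm{an}}\geqs\dim(\phi_K)_{\mathrm{an}}\geqs 2^n$. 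If $\phi_K$ is hyperbolic, then the anisotropic form $\phi_{\mathrm{an}}$ over $F$ is hyperbolic over $F(\rho)$ with $\rho$ an $n$-fold Pfister form; applying Corollary~\ref{pikernel} with $p=\rho$ (a Pfister form being, trivially, a neighbour of itself) yields $\phi_{\mathrm{an}}\simeq\rho\otimes r$ for some form $r$ over $F$, so $\dim\phi_{\mathrm{an}}=2^n\dim r\geqs 2^n$. This closes the induction and establishes $(\ast)$.

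I do not anticipate a serious obstacle: the inductive step merely peels off one Pfister factor by scalar extension to its function field, and the two resulting cases either lower the Pfister length (so the induction hypothesis applies) or place us precisely in the hypothesis of Corollary~\ref{pikernel}, which immediately forces $\phi_{\mathrm{an}}$ to be a multiple of an $n$-fold Pfister form and hence of dimension at least $2^n$. The points demanding care are organisational rather than substantive: $(\ast)$ must be proved uniformly over all base fields, since the Pfister length of $\phi$ drops only after passage to $F(\rho)$; and the $\dim q=2^n$ assertion should be extracted from $(\ast)$ via the passage to $F(q)$ together with Corollary~\ref{pfisterhypchar}, rather than attacked head-on. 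Beyond Corollaries~\ref{pikernel} and~\ref{pfisterhypchar}, the only input is the classical fact, recalled in the statement, that the $n$-fold Pfister forms additively generate $I^n(F)$.
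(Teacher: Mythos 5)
Your proposal is correct and follows exactly the route the paper indicates (but does not write out, deferring to Hoffmann's exposition): an induction on the Pfister length of the Witt class, using the generation of $I^n(F)$ by $n$-fold Pfister forms, Corollary~\ref{pikernel} to handle the step where the last Pfister factor's function field kills $\phi$, and Corollary~\ref{pfisterhypchar} to extract the $\dim q = 2^n$ assertion after passing to $F(q)$. The organisational care you flag — stating $(\ast)$ uniformly over all base fields so the induction can pass through $F(\rho)$, and treating the isotropic $\dim q = 2^n$ case separately — is exactly what is needed, and your argument is complete and sound.
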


\begin{remark}\label{LAM} Lam uses Arason and Pfister's consequence of hyperbolicity over function fields (incorporated in Theorem~\ref{Knhyp}) together an associated application of the Third Representation Theorem of Cassels and Pfister as the cornerstone of his exposition of basic theorems on function fields. Thus, we refer the reader to \cite[\S X.4]{LAM} for other established results that may be recovered as corollaries of Corollary~\ref{EKMgen} and the Third Representation Theorem.
\end{remark}


\begin{thebibliography}{23}
\bibitem[1]{AP} {J. Kr. Arason, A. Pfister}, Beweis des Krullschen Durschnittsatzes f\"ur den Wittring, {\em Inventiones Mathematicae} {\boldmath $12$}, $173-176$ (1971).
\bibitem[2]{BF} {E. Bayer-Fluckiger}, Formes quadratiques devenant isotropes sur une extension, {\em L'Enseignement Math\'ematique} {\boldmath $41$}, $111-122$ (1995).
\bibitem[3]{B} {K. J. Becher}, Minimal weakly isotropic forms, {\em Mathematische Zeitschrift} {\boldmath $252$}, $91-102$ (2006).
\bibitem[4]{EKM} {R. Elman, N. A. Karpenko, A. S. Merkurjev},
The algebraic and geometric theory of quadratic forms, American Mathematical Society Colloquium Publications {\boldmath $56$}, 
{\em American Mathematical Society} (2008).
\bibitem[5]{EL} {R. Elman, T. Y. Lam}, Pfister forms and K-theory of fields, {\em Journal of Algebra} {\boldmath $23$}, $181-213$ (1972).
\bibitem[6]{EL2} {R. Elman, T. Y. Lam}, Quadratic forms and the $u$-invariant. I., {\em Mathematische Zeitschrift} {\boldmath $131$}, $283-304$ (1973).
\bibitem[7]{F1} {R. W. Fitzgerald}, Function fields of quadratic forms, {\em Mathematische Zeitschrift} {\boldmath $178$}, $63-76$ (1981).
\bibitem[8]{F2} {R. W. Fitzgerald}, Witt kernels of function field extensions, {\em Pacific Journal of Mathematics} {\boldmath $109$}, $89-106$ (1983).
\bibitem[9]{Hdublin} {D. W. Hoffmann}, Isotropy of quadratic forms and field invariants, Quadratic Forms and their Applications, Dublin, 1999, {\em Contemporary Mathematics} {\boldmath 272}, $73-102$ (2000).
\bibitem[10]{Ilow} {O. T. Izhboldin}, On the isotropy of low-dimensional forms over the function field of a quadric, {\em Algebra i Analiz} {\bf 12}, No. 5, $106-127$ (Russian). English translation: {\em St. Petersburg Math Journal} {\bf 12}, No. 5, $791-805$ (2001).
\bibitem[11]{K} {B. Kahn}, Formes quadratiques sur un corps, Cours Sp\'ecialis\'es {\boldmath $15$}, Soci\'et\'e Math\'ematique de France, Paris (2008).
\bibitem[12]{Kn73} {M. Knebusch}, Specialization of quadratic and symmetric bilinear forms, and a norm theorem, {\em Acta Arithmetica} {\boldmath $24$}, $279-299$ (1973).
\bibitem[13]{Kn1} {M. Knebusch}, Generic Splitting of Quadratic Forms, I, {\em Proceedings of the London Mathematical Society} $(3)$
{\boldmath  $33$}, $65-93$ (1976).
\bibitem[14]{Kn2} {M. Knebusch}, Generic Splitting of Quadratic Forms, II, {\em Proceedings of the London Mathematical Society} $(3)$ {\boldmath  $34$}, $1-31$ (1977).
\bibitem[15]{LAM} {T. Y. Lam},
Introduction to Quadratic Forms over Fields, Graduate Studies in Mathematics, $\mathbf{67}$. {\em American Mathematical Society, Providence, RI} (2005).
\bibitem[16]{OS2} {J. O'Shea}, Multiples of Pfister forms, {\em Journal of Algebra} {\boldmath $449$}, $214-236$ (2016).
\bibitem[17]{P} {A. Pfister}, Multiplikative quadratische Formen, {\em Archiv der Mathematik} {\boldmath $16$}, $363-370$ (1965).
\bibitem[18]{Pf3} {A. Pfister}, Quadratische Formen in beliebigen K\"orpern, {\em Inventiones Mathematicae} {\boldmath $1$}, $116-132$ (1966).
\bibitem[19]{R} {S. Roussey}, Isotropie, corps de fonctions et \'equivalences birationnelles des
formes quadratiques, PhD thesis, Universit\'e de Franche-Comt\'e (2005).
\bibitem[20]{S} {W. Scharlau},
Quadratic and Hermitian forms, Grundlehren der Mathematischen Wissenschaften, {\boldmath $270$}. {\em Springer-Verlag, Berlin} (1985).
\bibitem[21]{Scully2} {S. Scully}, Hyperbolicity and near hyperbolicity of quadratic forms over function fields of quadrics, preprint, arXiv:1609.07100v1 (2016).
\bibitem[22]{W} {A. R. Wadsworth}, Similarity of quadratic forms and isomorphisms of their function fields, {\em Transactions of the American Mathematical Society} {\boldmath $208$}, $352-358$ (1975).
\bibitem[23]{WITT} {E. Witt}, Verschiedene Bemerkungen zur Theorie der quadratischen Formen \"uber einem K\"orper, {\em Colloque d'Alg\`ebre Sup\'erieure, Bruxelles}, $245-250$ (1956). 
\end{thebibliography}
\end{document}